\newcommand{\xtriarrow}[1]{\blacktriangleright\mkern-8mu\xrightarrow{#1}}
\newcommand{\xdiarrow}[1]{\rotatebox[origin=c]{45}{\scalebox{0.75}{$\square$}}\mkern-8mu\xrightarrow{#1}}
\newcommand{\xdibarrow}[1]{\rotatebox[origin=c]{45}{\scalebox{0.75}{$\blacksquare$}}\mkern-8mu\xrightarrow{#1}}
\newcommand{\xtribarrow}[1]{\rhd\mkern-8mu\xrightarrow{#1}}
\newcommand{\Q}{{\mathbb Q}}
\newcommand{\Z}{{\mathbb Z}}
\newcommand{\N}{{\mathbb N}}
\newcommand{\BS}{\mathsf{BS}}
\newcommand{\NFB}{\mathsf{NF}}
\newcommand{\NFFB}{\mathsf{NF^{\rm frac}}}
\newcommand{\fw}{\hat{w}}
\newcommand{\fx}{\hat{x}}
\newcommand{\fy}{\hat{y}}
\newcommand{\fz}{\hat{z}}
\newcommand{\up}{{+}}
\newcommand{\um}{{-}}
\newcommand{\ud}{{\centerdot}}
\newcommand{\LF}{\mathsf{L_{\rm frac}}}
\newcommand{\Con}{\mathsf{Con}}
\newcommand{\emptystring}{\epsilon}
\newcommand{\ainv}{{\hbox{\sc a}}}
\newcommand{\binv}{{\hbox{\sc b}}}
\newcommand{\aA}{\alpha}
\newcommand{\bB}{\beta}
\newcommand{\sct}{{\hbox{\sc t}}}
\newcommand{\scr}{{\hbox{\sc r}}}
\newcommand{\OL}{{\rm \sc 0l}\xspace}
\newcommand{\E}{{\rm \sc edt0l}\xspace}
\newcommand{\NE}{{\rm \sc et0l}\xspace}
\newcommand{\cG}{\mathcal{G}}
\newcommand{\cH}{\mathcal{H}}
\newcommand{\cP}{\mathcal{P}}
\newcommand{\cR}{\mathcal{R}}
\newcommand{\cT}{\mathcal{T}}
\newcommand{\cU}{\mathcal{U}}
\newcommand{\cV}{\mathcal{V}}
\newcommand{\sa}{\mathsf{a}}
\newcommand{\sA}{\mathsf{A}}
\newcommand{\ct}{\mathsf{c}}
\newtheorem{theorem}{Theorem} % 1st argument is your name for it
\newtheorem{lemma}[theorem]{Lemma} % 2nd argument is what is printed
\newtheorem{proposition}[theorem]{Proposition}
\newtheorem{definition}[theorem]{Definition}
\newenvironment{proofof}[1]{\normalsize {\it Proof of #1}.}{{\hfill $\Box$}}
\newenvironment{mylist}{\begin{list}{}{
\setlength{\parskip}{0mm}
\setlength{\topsep}{2mm}
\setlength{\parsep}{0mm}
\setlength{\itemsep}{0.5mm}
\setlength{\labelwidth}{7mm}
\setlength{\labelsep}{3mm}
\setlength{\itemindent}{0mm}
\setlength{\leftmargin}{12mm}
\setlength{\listparindent}{6mm}
}}{\end{list}}
\newenvironment{bracedeqn}
  {\left.\begin{aligned}}
  {\end{aligned}\right\rbrace}
\title{Using \E systems to solve some equations in the solvable Baumslag-Solitar
groups}
\author{Andrew Duncan, Alex Evetts, Derek F. Holt, Sarah Rees}
\begin{document}
\maketitle

\begin{abstract}
	We investigate the solution sets to equations in the solvable Baumslag-Solitar groups \(BS(1,k)\), \(k\geq2\), and show that these sets are represented by \E languages in some cases. In particular, we prove that the multiplication table of such a group forms an \E language with respect to a specific natural
normal form for group elements. % in an appropriate sense.
\end{abstract}

\section{Introduction}
\label{sec:intro}
This work was motivated in part by Ciobanu, Diekert and Elder's proof \cite{CDE} that
the solution sets of systems of equations over free groups are \E (which was subsequently generalised to virtually free groups by Diekert and Elder, hyperbolic groups by Ciobanu and Elder \cite{CE}, and right-angled Artin groups by Diekert, Je\.z and Kufleitner \cite{DJK}). Evetts and Levine \cite{EL} proved that the same thing is true in virtually abelian groups. 
Further motivation was provided by 
the proof of Kharlampovich, López and Myasnikov \cite{KLM}
that it is decidable whether or not an equation over a group $G$ is solvable,
for all $G$ in a family of groups that includes the solvable Baumslag-Solitar
groups and groups with structure $A \wr \Z$ for $A$ finitely generated abelian 
(such as the lamplighter group when $|A|=2$).
We wanted to understand for which types of groups solution sets to
systems of equations might be \E languages, and why this might be a natural
family of languages in which to find such solution sets.

In this article we have attempted to provide an accessible description of this relatively unknown family of languages, and of how we may find solutions for group equatons within it.
Using the solvable Baumslag-Solitar groups 
as our `testbed', we have examined various rather elementary equations over
these groups, proved some to have solution sets that are \E languages,
and provided examples of others that seem not to be (although we have not yet proved conclusively that they are not).

The following Section~\ref{sec:esystems} contains the definitions of \NE and \E systems and related languages, relating them to the better known families of 
context-free and indexed languages, and listing some of the operations under which the sets of \NE and \E languages are closed.
The Baumslag-Solitar groups are studied from Section~\ref{sec:BSdefsNF} onwards,
with definitions and the construction of two different normal forms in Section~\ref{sec:BSdefsNF}, preliminary results in Section~\ref{sec:BSprelim} and the
consideration of 
particular equations relating to centralisers, conjugacy, multiplication 
and inversion, in Sections~\ref{sec:BScentConj}--\ref{sec:BSexplicit}.
In Section~\ref{sec:BSnonEg} we describe a solution set for a centraliser 
equation that we believe is not \E, and explain why we believe this, 
while not giving a proof of that fact.

\section{\NE and \E systems}
\NE languages, introduced by Rozenberg  \cite{Rozenberg}, generalise both context-free and \OL languages, and are most naturally defined through their grammars, known as \NE systems. An important component of such a grammar is a set of tables. 

\label{sec:esystems}
\begin{definition}
Let $\cV$ be a finite alphabet. A \emph{table} for $\cV$ is a finite subset
of $\cV\times \cV^*$, considered as a finite collection of rewriting
rules of the form $v \to w_1,\ldots,w_r$, for $v\in \cV$, $w_i\in \cV^*$,
whose application replaces each instance of $v$ with any one of $w_1$,$\cdots$, $w_r$.
\end{definition}

We use the conventions that (1) when a table is applied to a word it must be applied to every letter within that word and (2) if a table does not specify a rewrite for some
letter in $\cV$, then applying the table fixes that letter.
A table will act on the right of a word, so we write $w\sct$ for the result of 
applying a table $\sct$ to a word $w$. Applying one table after another will be
denoted by concatenation; note that when applying a string of tables that string should be read from left to right.
Note that if the right hand side of each rule in a table contains only one 
word, then that table defines a free monoid endomorphism of $\cV^*$.

\begin{definition}
	An \NE system is a tuple $\cH=(\cV,\cU,\cR,v_0)$ where
	\begin{enumerate}
	\item $\cV$ is a finite alphabet,
	\item $\cU\subset \cV$ is the set of terminals,
	\item $\cR$ is a regular subset of $\cT^*$ for some finite set of 
	tables $\cT$ for $\cV$, called the \emph{rational control} of $\cH$.
	\item $v_0\in \cV^*$ is a chosen word called the \emph{start word} or \emph{axiom}.
	\end{enumerate}
The language $\{v\in \cU^*\colon v=v_0\scr\text{ for some }\scr\in \cR\}$ is the
language of the system $\cH$. A language arising from an \NE system is called
an \NE language.
\end{definition}
It is proved in \cite{Culik} and \cite{EFS} that the class of \NE languages
is properly contained in the class of indexed languages, as defined in
\cite{Aho}. 

\begin{definition}
	An \E system is an \NE system $\cH=(\cV,\cU,\cR,v_0)$ where each table in $\cT$,
	the alphabet of $\cR$, is a free monoid endomorphism $\cV^*\to \cV^*$;
	that is, each rule in a table contains a unique word on the right hand side.
\end{definition}

Let $\cG = (\cV,\cU,\cP,s_0)$ be a context-free grammar, with
set $\cV$ of terminals, $\cU \subseteq \cV$, $\cP$ of productions, and start
variable $s_0$; for such a grammar the left hand side of any production is necessarily in $\cV \setminus \cU$,
then we can form an \NE system $(\cV,\cU,\cR,s_0)$ with the same language as $\cG$ as follows:

For each $v \in \cV$, we define $\cP_v$ to be the set of all productions
in $\cP$ with left hand side $v$.
If the productions in $\cP_v$ are
$v\rightarrow u_1,\ldots, v \rightarrow u_k$, then we define $r_v$ to be the
rule $v\rightarrow u_1, u_2, \cdots u_k,v$.
Note that the fact that $v$ is on the right-hand side of the rule indicates that
we are not obliged to apply a non-trivial production to $v$.

We define the table $\sct_v$ to be the singleton set $\{ r_v\}$,
$\cT$ to be $\{ \sct_v : v \in \cV \}$,
and then $\cR$ to be $\cT^*$.
The language of the \NE system $(\cV,\cU,\cR,s_0)$ is the context-free language
generated by the grammar $\cG$.
But note that context-free languages exist that do not arise as the languages
of \E systems \cite{EF}.

The following two lemmas are standard (see \cite{RS}).
\begin{lemma}
\label{lem:regedt0l}
	The class of \E languages contains the class of regular languages.
\end{lemma}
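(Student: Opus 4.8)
The plan is to start from the characterisation of regular languages as those generated by right-linear grammars and to turn such a grammar directly into an \E system in which each production becomes a single table. Let $L\subseteq\Sigma^*$ be regular, and fix a right-linear grammar $\cG$ for $L$ with terminal alphabet $\Sigma$, disjoint nonterminal set $N$, start symbol $S\in N$, and productions each of the form $A\to aB$, $A\to a$, or $A\to\emptystring$ with $A,B\in N$ and $a\in\Sigma$ (every regular language admits such a grammar). I set $\cV=\Sigma\cup N$, $\cU=\Sigma$, and $v_0=S$.

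For each production $p$ with left-hand side $A$ and right-hand side $\rho\in\cV^*$, I define a table $\sct_p$ to be the free monoid endomorphism of $\cV^*$ that sends $A\mapsto\rho$ and fixes every other letter of $\cV$ (in particular it fixes every terminal and every nonterminal other than $A$). Each $\sct_p$ is then a legitimate \E table, since its single rule has a unique word on the right-hand side. I take $\cT=\{\sct_p:p\text{ a production of }\cG\}$ and let the rational control be the full (hence regular) language $\cR=\cT^*$, producing an \E system $\cH=(\cV,\cU,\cR,S)$.

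The heart of the argument is the observation that every sentential form arising in a derivation of a right-linear grammar has the shape $wA$, with $w\in\Sigma^*$ and $A\in N$ a single nonterminal at the right-hand end, or else the shape $w\in\Sigma^*$ once the derivation has terminated. Consequently, applying the endomorphism $\sct_p$ to a word $wA$ has exactly one of two effects: if $A$ is the left-hand side of $p$, then (since the table fixes all terminals of $w$ and there is no other nonterminal present) the result is precisely the sentential form obtained by the corresponding single derivation step; and if $A$ is not the left-hand side of $p$, then $\sct_p$ fixes $wA$ and the step is a harmless no-op. Moreover, every table fixes a word lying in $\Sigma^*$, so terminal words are stable under further rewriting. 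Running over all $\scr\in\cT^*$ therefore reaches exactly the sentential forms of $\cG$, whence the words of $\cH$ lying in $\cU^*=\Sigma^*$ are exactly the terminal strings derivable in $\cG$, i.e.\ $L$.

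I do not expect a serious obstacle; the one point that must be checked with care is precisely the matching between the endomorphic action ``rewrite every letter'' and a single right-linear derivation step. This is what forces the use of a right-linear, rather than a general context-free, grammar: it guarantees a unique nonterminal per sentential form, so that the table rewriting $A$ cannot disturb any other nonterminal and there is never a need for the ``keep $v$ on the right-hand side'' device used in the context-free-to-\NE construction above. Handling $\emptystring\in L$ (via a production $S\to\emptystring$, realised by the endomorphism $S\mapsto\emptystring$) and the disjointness of $\Sigma$ and $N$ are routine.
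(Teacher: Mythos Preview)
Your proof is correct. The key observation—that right-linear sentential forms carry at most one nonterminal, so an endomorphism rewriting that nonterminal simulates exactly one derivation step (or is a no-op)—is sound, and the forward and backward inclusions between the language of $\cH$ and $L(\cG)$ follow cleanly from it.

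As for comparison with the paper: there is nothing to compare. The paper does not prove this lemma at all; it simply records it as standard and cites Rozenberg and Salomaa~\cite{RS}. Your argument therefore supplies an explicit proof where the paper gives only a reference. The route you take (right-linear grammar $\to$ one table per production, rational control $\cT^*$) is the natural one and is essentially the folklore argument. Note also that one can bypass grammars entirely: given a DFA for $L$, take $\cV=\cU$, axiom $\emptystring$, one table $\sct_a\colon v\mapsto v$ for each $a\in\cU$ appending nothing but used only to record the letter, and let the rational control be the accepted-word language of the DFA read as a string of tables—but this requires tables that act as the identity while still being distinguishable, which fits the paper's framework since tables are just labelled endomorphisms. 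Your grammar-based version is arguably cleaner because it makes the termination mechanism explicit.
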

\begin{lemma}
\label{lem:closure}
	The classes of \E and \NE languages are both closed under the following operations:
	\begin{enumerate}
		\item finite union,
		\item intersection with regular languages,
		\item concatenation,
		\item Kleene star, 
		\item image under free monoid homomorphisms.
	\end{enumerate}
	The class of \NE languages is additionally closed under taking preimages
	of free monoid homomorphims.
\end{lemma}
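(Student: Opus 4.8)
The plan is to prove each closure property by a direct construction on the defining systems, following the standard templates in \cite{RS}. The recurring device is to take disjoint renamed copies of the alphabets of the given systems, adjoin a few auxiliary symbols and tables, and assemble the new rational control as a regular combination of the given controls, using that the regular languages are closed under the operations involved. Throughout, the one structural difference that matters is that a table of an \E system must remain a free monoid endomorphism, whereas a table of an \NE system may carry several right-hand sides per letter; this is exactly what decides which class enjoys which property.

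For finite union of systems $\cH_i=(\cV_i,\cU_i,\cR_i,v_0^{(i)})$, with the $\cV_i$ renamed to be disjoint, I would adjoin a fresh start symbol $S$ and tables $\sct_i\colon S\mapsto v_0^{(i)}$ (each a genuine endomorphism), extend every table to fix the other alphabets, and take the regular control $\bigcup_i \sct_i\,\cR_i$. Image under a free monoid homomorphism $g$ is obtained by adjoining the codomain alphabet, appending one final table sending each old terminal $a$ to the fixed word $g(a)$ and fixing all else, and declaring the codomain alphabet terminal. Both constructions preserve endomorphisms, so they apply uniformly to \E and \NE systems.

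Concatenation and Kleene star must respect the parallel rewriting: were two copies of an axiom present at once, a single table would rewrite them identically, so the factors could not be derived independently. The remedy is to \emph{sequence} the sub-derivations through the rational control and to \emph{protect} completed factors. For $L_1L_2$ I would use disjoint alphabets, a start table $S\mapsto v_0^{(1)}v_0^{(2)}$, and control $\sct_S\,\cR_1\,\cR_2$, so that while the second factor is being derived the finished $\cU_1$-factor is inert (the tables of the second system fix $\cV_1$). For $L^*$ I would keep a single active region in a primed copy of the alphabet together with a frozen copy $\bar{\cU}$ of the terminals: one block of control derives a factor into $\cU'$, a freezing table sends $\cU'$ to $\bar{\cU}$ (never touched again), and a persistent sentinel lets the next primed axiom be spawned; iterating this block and ending with an unfreezing and cleanup table yields $L^*$. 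Only one active factor ever exists, so the duplication problem disappears, and every auxiliary table is an endomorphism.

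The two substantive steps are intersection with a regular language and, for \NE only, preimage under a free monoid homomorphism; both rest on a product with a DFA $(Q,q_0,F,\delta)$ for the regular language. For intersection I would annotate the working alphabet by state pairs, a symbol $(v,p,q)$ meaning ``a letter whose eventual terminal yield drives the automaton from $p$ to $q$,'' and replace each rule $v\to x_1\cdots x_m$ by $(v,p,q)\to (x_1,p,r_1)\cdots(x_m,r_{m-1},q)$ over guessed intermediate states $r_i$, accepting when the outer pair lies in $\{q_0\}\times F$. For an \NE system the various choices of the $r_i$ are simply extra right-hand sides, so the construction is immediate. The main obstacle is the \E case, where determinism forbids such choices, so the guessing must instead be realised without multiple right-hand sides and the annotated derivation arranged to stay consistent; this is the delicate point, whose verification I would take from \cite{RS}. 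Finally, closure of \NE under preimages uses an analogous construction in which a final phase guesses a preimage factorisation of the derived terminal word, and this again relies essentially on nondeterministic table rules — which is precisely why the corresponding closure is not asserted for \E systems.
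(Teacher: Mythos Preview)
The paper does not prove this lemma at all: the sentence preceding it reads ``The following two lemmas are standard (see \cite{RS})'', and no argument is given. Your sketch therefore goes beyond what the paper offers, and the constructions you outline for union, homomorphic image, concatenation, and Kleene star are the standard ones and are correct (with the small caveat that after making the alphabets disjoint for concatenation and star you still need a final identifying homomorphism on the terminal copies, which you have already established preserves both classes). For intersection with a regular language you rightly flag that the triple construction works directly for \NE but not for \E, and you defer the \E case to \cite{RS}; the usual remedy there is to push the nondeterministic choice of intermediate states into the \emph{set of tables} rather than into the rules---for each original table one creates finitely many annotated deterministic copies, one per global choice of state-factorisation over all annotated letters, and substitutes this finite set for the original table in the rational control. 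Your description of the \NE inverse-homomorphism case is the vaguest part of the sketch, but since the paper provides nothing here either, there is nothing to compare against.
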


\section{The solvable Baumslag-Solitar groups: definition and normal forms}
%\label{sec:BS}
%\subsection{Definition and normal forms}
\label{sec:BSdefsNF}
We use the notation $\N := \{ n: n \in \Z, n >0 \}$, $\N_0 := \N \cup \{0\}$.

The solvable Baumslag-Solitar groups are the groups defined by the
presentations

$$\BS(1,k) = \langle a,b \mid b^{-1} a b = a^k \rangle$$
for some fixed $k \in \Z \setminus\{0\}$. We shall assume here that $k>0$, since
the groups with $k<0$ are similar, but with additional minor complications.
Although we have not checked all of the details, we believe that all of
the results proved in this section remain true when $k$ is negative, but
their proofs require subdivisions into more cases than when $k>0$.
We shall assume further that $k>1$, since $\BS(1,1)$ is free abelian.
For ease of notation, we shall abbreviate $\BS(1,k)$ as $\BS$.

Note that $\BS$ is a split extension of the infinite abelian group
$N = \langle a \rangle^\BS$ (the normal closure of the subgroup
$\langle a \rangle$ in $\BS$) by the infinite cyclic group $\langle b \rangle$.
We denote by $a^{\frac{1}{k}}$ the element $bab^{-1}$ and note that 
$(a^{\frac{1}{k}})^k=a$. Similarly we represent 
$b^ja^ib^{-j}$ by $a^{\frac{i}{k^j}}$, where $i,j\in \Z$, $i \neq 0$, $j>0$.
We also write $\ainv$ for $a^{-1}$, and $\binv$ for $b^{-1}$.

With this notation, we observe that any element $g$ of $\BS$ can be represented
as a product $b^r \aA ^u$ for $\aA \in \{a,\ainv\}$, where $r \in \Z$ and 
\[u =  \left(\frac{i_m}{k^m} + \frac{i_{m-1}}{k^{m-1}} + \cdots
+ \frac{i_1}{k}\right) + s,\quad 
m,s \in \N_0,\,i_j \in \{0,\ldots,k-1\},\]
and we interpret $\aA^u$ as the product
\begin{eqnarray*}
&& (b^m\aA b^{-m})^{i_m}  (b^{m-1}\aA b^{-m+1})^{i_{m-1}}  \cdots
(b\aA b^{-1})^{i_1}\aA ^s.
\end{eqnarray*}
We require that
either $m=0$ (that is, the sequence $i_1,\ldots,i_m$ is empty)
or $m>0$ and $i_m \neq 0$.
We shall call $b^r\aA^u$  the {\em fractional representation} of the
group element, and we shall call $s$ and
$i_m/k^m + i_{m-1}/k^{m-1} + \cdots + i_1/k$ the  {\em integral}
and {\em fractional} parts of the exponent $u$.

The fractional representation  
provides a mechanism for representing each element of $\BS$ by a unique string
of symbols from the alphabet
$\{b,\binv,0,1,\ldots,k\!-\!1,\ud,+,-\}$.
The group element
$b^r\aA^u$ is represented by
the string $\fw$ given by
\begin{equation*}
\begin{bracedeqn}
%\begin{eqnarray*}
&\bB \cdots \bB + i_m i_{m-1} \cdots i_1 \ud s_0 s_1 \cdots s_p\,{\rm when}\ \aA = a,\\
&\bB\cdots \bB - i_m i_{m-1} \cdots i_1 \ud s_0 s_1 \cdots s_p \, {\rm when}\ \aA = \ainv,\\
\hbox{\rm where}\ &
p \ge -1,\,i_1,\ldots,i_m,s_0,\ldots s_p \in \{0,1,\ldots,k\!-\!1\},\\
\hbox{\rm and if}\ p\neq -1,\hbox{\rm then}\ 
&s_p \ne 0\,{\rm and}\, s = s_0 + s_1k + s_2k^2 + \cdots + s_pk^p,
%\end{eqnarray*}
\end{bracedeqn}
\shoveright \text{(1)}
\end{equation*}
where $\bB$ is $b$ when $r\geq 0$, $\binv$ when $r<0$, 
and where the prefix $\bB\cdots \bB$ consists of $|r|$ copies of $\bB$
(and in future will generally be abbreviated as $\beta^r$).
By convention, we define $p=-1$ if the sequence $s_0,\ldots,s_p$ is empty
and $s=0$.

Then the suffix ${\pm} i_mi_{m-1} \cdots i_1 \ud  s_0 \cdots s_p$ of $\fw$
is precisely the representation of $u$ in base $k$ written backwards.
The symbol $\ud$ (which would be the decimal point when $k=10$) is known
as the \emph{radix point}.
We shall call the subwords $i_mi_{m-1} \cdots i_1$ and $s_0 \cdots s_p$
the \emph{fractional} and \emph{integral} parts of $\fw$.
Note that either or both of these could be empty, and the representation of
the element $\beta^t$ is $\beta^t+\ud$ for $t \ge 0$.

We denote the representation of $g \in \BS$ defined in (1) by $\NFFB(g)$,
and define $\NFFB:=\{\NFFB(g):g \in \BS\}$. 
We can think of $\NFFB$ as a normal form for $\BS$, although of course the elements of its alphabet are not all within the group.

Now the product
\[ b^r(b^m\aA b^{-m})^{i_m}  (b^{m-1}\aA b^{-m+1})^{i_{m-1}}  \cdots
(b\aA b^{-1})^{i_1}\aA ^s 
\]
that represents $b^r\alpha^u$ can be freely reduced to give the word
\[w= b^{t} \aA^{i_m} \binv \aA^{i_{m-1}} \binv \cdots \aA^{i_1} \binv \aA^s,
   \eqno{(2)} \] where $t=r+m$.

When $t\geq 0$,
we shall refer to the subwords $b^t$, $\aA^{i_m} \binv \aA^{i_{m-1}} \binv \cdots
\aA^{i_1}\binv$, and $\aA ^s$  of $w$ as its {\em left, central and right} subwords,
respectively, 
but if $t<0$ we define the left subword to be empty,
the central subword to be
 $\binv^{|t|}\aA^{i_m} \binv \aA^{i_{m-1}} \binv \cdots\aA^{i_1}\binv$
 and the right subword to be $\aA^s$.
We call the subwords $\aA^{i_j}\binv$ 
together with the letters $\binv$ in a $\binv^{|t|}$ prefix
the {\em components} of the central subword.

Observe that the integral part $s$ of $\fw$ is the base $k$ representation of
the exponent of $\aA$ in the right subword of $w$, whereas the fractional part
of $\fw$, together with the value of $t$ when $t$ is negative,
determines the central subword of $w$.

We denote the representation of $g \in \BS$ defined in (2) by $\NFB(g)$,
and put $\NFB:=\{\NFB(g):g \in \BS\}$. 
Note that both $\NFB$ and $\NFFB$ are regular languages.

\section{Preliminary results}
\label{sec:BSprelim}
\begin{proposition}
\label{prop:fixedr}
Fix a constant $r\in\Z$.
The subset $\NFB_r= \{ \NFB(g) : g = b^r \alpha^u \mbox{ for some }u \}$
of words in $\NFB$ corresponding to this value of $r$ forms an \E language.
\end{proposition}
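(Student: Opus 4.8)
The plan is to first isolate the single invariant that determines $r$, and then to realise the resulting language explicitly by an \E system whose rational control performs the required bookkeeping. The key observation is that, writing $w=\NFB(g)$ as a word over $\{a,\ainv,b,\binv\}$, the condition $g=b^r\alpha^u$ is equivalent to one linear condition on letter counts. The map $\BS\to\Z$ sending $a\mapsto 0$, $b\mapsto 1$ is a well-defined homomorphism (the relator maps to $0$), under which $b^r\alpha^u\mapsto r$, since every conjugate of $\alpha^{\pm1}$ maps to $0$; as $w$ represents $g$, its image is the $b$-exponent sum of $w$. Hence
\[ \NFB_r = \bigl\{\, w\in\NFB : (\text{number of }b\text{ in }w)-(\text{number of }\binv\text{ in }w)=r \,\bigr\}. \]
So the only genuine constraint cutting $\NFB_r$ out of the regular language $\NFB$ is that a certain \emph{unbounded} count equals the fixed constant $r$ --- exactly the kind of correlated counting that is invisible to finite automata but available to \E systems through synchronous rewriting.

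I would first split off the sign of $\alpha$: $\NFB_r=\NFB_r^a\cup\NFB_r^{\ainv}$. Since the letter-swap $a\leftrightarrow\ainv$ (fixing $b,\binv$) is a free monoid homomorphism carrying $\NFB_r^a$ onto $\NFB_r^{\ainv}$, Lemma~\ref{lem:closure} reduces everything to showing $\NFB_r^a$ is \E. Recall from~(2) that a word in $\NFB_r^a$ with $t=r+m\ge 0$ has the shape $b^{t}\,a^{i_m}\binv\cdots a^{i_1}\binv\,a^s$: a prefix of $t$ copies of $b$, then $m$ components $a^{i_j}\binv$, then $a^s$, with the number of leading $b$ exceeding the number of components by exactly $r$.

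For the heart of the argument I would build, in the case $r\ge 0$, an \E system with non-terminals $X,R$ (all other letters terminal), axiom $b^{r}XR$, and endomorphism tables $T_d\colon X\mapsto bXa^d\binv$ for each digit $d\in\{0,\dots,k-1\}$ (emitting one leading $b$ and one component), $U\colon R\mapsto aR$, together with the erasing tables $X\mapsto\emptystring$ and $R\mapsto\emptystring$. Applying $T_{d_1}\cdots T_{d_m}$ to the axiom yields $b^{r+m}X\,a^{d_m}\binv\cdots a^{d_1}\binv R$, so the last table applied produces the leftmost (top) component. A rational control of the form ``zero or more $T_d$, then one final $T_d$ with $d\ne 0$, then erase $X$, then some $U$, then erase $R$'', together with the alternative branch that skips the $T$-phase (giving $m=0$), generates precisely the words $b^{r+m}a^{i_m}\binv\cdots a^{i_1}\binv a^s$ with $i_m\ne 0$ and $s\ge 0$. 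This synchronisation, making the leading-$b$ count track the component count, is the step I expect to be the crux.

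When $r<0$ the element may have $t=r+m<0$, in which case the normal form instead begins with $\binv^{|t|}$; I would cover this by a finite union. For $t\ge 0$ (forcing $m\ge|r|$) I use the same construction augmented by a second family $T'_d\colon X\mapsto Xa^d\binv$ that emits a component \emph{without} a leading $b$, and let the control require exactly $|r|$ uses of the $T'$-family (a regular condition, since $|r|$ is a constant), so that the net $b$-exponent is $r$. For $t<0$ the index $m$ ranges over the finite set $\{0,\dots,|r|-1\}$, and each word is $\binv^{|r|-m}a^{i_m}\binv\cdots a^{i_1}\binv a^s$ with finitely many choices of $m$ and of the digits, so this part is a finite union of sets of the form $\{W\}a^*$, hence regular and therefore \E by Lemma~\ref{lem:regedt0l}. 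Unioning these \E pieces, and then with $\NFB_r^{\ainv}$, finishes the proof. The genuinely hard point is the unbounded synchronisation of the main construction; the case analysis around $t<0$ and the normal-form requirement $i_m\ne0$ are fiddly but routine.
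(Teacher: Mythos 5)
Your proposal is correct and takes essentially the same approach as the paper: your tables $T_d$, $T'_d$, $U$ and the erasing tables are exactly the paper's $\psi_{aj}$, $\phi_{aj}$, $\mu_a$ and $\nu$, with the same use of the rational control to force the top digit $i_m\neq 0$, to insert exactly $|r|$ components without a matching $b$ when $r<0$ and $t\ge 0$, and to dismiss the $t<0$ case as a finite union of regular sets. The only cosmetic differences are that you obtain the $\ainv$-half as a homomorphic image of the $a$-half (the paper simply repeats the construction with $\ainv$ in place of $a$) and that you allow the $|r|$ \emph{no-$b$} tables to occur anywhere in the control string, whereas the paper applies them first.
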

\begin{proof}
	Let  $\cU=\{a,\ainv,b,\binv\}$ and $\cV=\{a,\ainv,b,\binv,S,T\}$, and define a set $\cT$
	of endomorphisms of $\cV^*$  (each of which is the sole entry of a table within $\cT$)
	as follows:
	\begin{align*}
		\phi_{aj}&\colon S\mapsto Sa^j\binv,\text{ for each }0\leq j\leq k-1 \\
		\phi_{\ainv j}&\colon S\mapsto S\ainv^j\binv,\text{ for each }0\leq j\leq k-1 \\
		\psi_{aj}&\colon S\mapsto bSa^j\binv,\text{ for each }0\leq j\leq k-1 \\
		\psi_{\ainv j}&\colon S\mapsto bS\ainv^j\binv,\text{ for each }0\leq j\leq k-1 \\
		\theta&\colon S\mapsto bS \\
		\mu_a&\colon T\mapsto Ta \\
		\mu_\ainv&\colon T\mapsto T\ainv \\
		\nu&\colon S\mapsto\emptystring, T\mapsto\emptystring.
	\end{align*}
	We define 
	\begin{eqnarray*}
		\Phi_a := \{ \phi_{a0},\ldots,\phi_{a,k-1}\},&&
		\Phi_A := \{ \phi_{A0},\ldots,\phi_{A,k-1}\},\\
		\Psi_a := \{ \psi_{a0},\ldots,\psi_{a,k-1}\},&&
		\Psi_A := \{ \psi_{A0},\ldots,\psi_{A,k-1}\}.
	\end{eqnarray*}

	First, suppose that $r\geq0$. In this case, the language in question is
	%\[\{b^rb^ma^{i_1}\binv a^{i_2}\binv\cdots a^{i_m}\binv\alpha^s\mid \alpha\in\{a,\ainv\}, m\geq0, s\geq0\}.\]
	\[\{b^rb^m\aA^{i_m}\binv\aA^{i_{m-1}}\binv\cdots \aA^{i_1}\binv\aA^s\colon  m\geq0, s\geq0\}.\]
	This is the language of the \E system $(\cV,\cU,\cR_1,ST)$ with rational
	control $\cR_1$ given by $\cR_1 := \cR_{1a} \mid \cR_{1\ainv}$
(recall that the symbol `$\mid$' denotes union in the standard notation for
regular sets) where
	\[\cR_{1a}=\theta^r \left (\Psi_a^* \setminus \Psi_a^*\psi_{a0}\right)\mu_a^*\nu,\]
and $\cR_{1\ainv}$ is defined similarly with $\ainv$ in place of $a$.

	Now suppose that $r<0$. If we also have $t=m+r<0$, then the left
subword is empty and there are only finitely many possibilities for the
central subword, so the language in this case regular. For the case
$t \ge 0$, we need to show that the following language is \E:
	\[\{b^t\aA^{i_{t+|r|}}\binv\aA^{i_{t+|r|-1}}\binv\cdots \aA^{i_{t+1}}\binv\aA^{i_t}\binv\cdots \aA^{i_{1}}\binv\aA^s\colon t\geq0, s\geq0\}.\]
	This is the language of the \E system $(\cV,\cU,\cR_2,ST)$ with rational 
	control $\cR_2$ given by  $\cR_2 :=  \cR_{2a} \mid  \cR_{2\ainv}$, where
	\[\cR_{2a}:= \Phi_a^{|r|}\left (\Psi_a^* \setminus \Psi_a^*\psi_{a0}\right)\mu_a^*\nu, \]
and $\cR_{2\ainv}$ is defined similarly with $\ainv$ in place of $a$.
\end{proof}

The following lemma will be used several times in the proofs in
Section~\ref{sec:BSmultInv} below.

\begin{lemma}
\label{lem:expedt0l}
Suppose that $r>0$. Fix constants $n_0\in\N$, $\lambda,c\in\Z$,
let $\aA$ be equal to either $a$ or to $\ainv$, and let
  $w\in (\aA \binv^+\mid \aA^2\binv^+\mid\cdots\mid \aA^{k-1}\binv^+)^*$ be a constant word (with \(k\) fixed as above).
   Let $\{s_n\}_{n\in\N}$ be a fixed sequence of integers
such that, for each $n\geq n_0$, we have
 \[s_{n+1}=k^{r}s_n+\lambda\quad\hbox{\rm and}\quad s_{n+1} > s_n.\]
	Then, for some $n' \in \N$, the following subset of
	$\{a,\ainv,b,\binv\}^*$ 
	is \E:
	\[\{b^{rn+c} w \aA^{s_n} \colon n\geq n'\}.\]
\end{lemma}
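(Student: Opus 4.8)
The plan is to realise the language as the output of a single \E system that runs one synchronised ``step'' endomorphism once for each increment of $n$ beyond a chosen base value $n'$, using distinct nonterminals to grow the prefix $b^{rn+c}$ linearly and the suffix $\aA^{s_n}$ geometrically at the same time. The key point is that the constant word $w$ (which itself contains letters $\aA$ and $\binv$) must be shielded from the endomorphism that inflates the suffix; I achieve this by writing $w$ entirely in terminal letters, which the step endomorphism fixes, and by carrying the growing $\aA$-count in a dedicated nonterminal.

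Concretely, I would work over $\cV=\{a,\ainv,b,\binv,X,Y,R\}$ with terminals $\cU=\{a,\ainv,b,\binv\}$, where $X$ will generate the $b$-prefix, each $Y$ will eventually become one copy of $\aA$, and $R$ is a right-hand boundary marker. Writing $K=k^r$, the recursion $s_{n+1}=Ks_n+\lambda$ says that passing from step $n$ to step $n+1$ multiplies the suffix exponent by $K$ and then adds the constant $\lambda$. Multiplication by $K$ is effected by the rule $Y\mapsto Y^{K}$, while the additive constant is injected at the marker. Since a free monoid endomorphism cannot delete letters, the case $\lambda<0$ needs a device: I pass to the shifted variable $s_n'=s_n-M$, where $M$ is a nonnegative integer chosen large enough that $\lambda':=(K-1)M+\lambda\ge 0$. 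A short computation shows $s_{n+1}'=Ks_n'+\lambda'$ with $\lambda'\ge 0$, so the shifted recursion has a nonnegative additive constant and can be driven by the step endomorphism
\[
\sigma\colon\quad X\mapsto b^{r}X,\qquad Y\mapsto Y^{K},\qquad R\mapsto Y^{\lambda'}R,
\]
which fixes every terminal. Applied to a suffix of the form $Y^{s_n'}R$ this produces $Y^{Ks_n'+\lambda'}R=Y^{s_{n+1}'}R$, exactly as required.

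For the prefix I start from the constant factor $b^{rn'+c}$ and let each application of $\sigma$ prepend a further $b^{r}$ through $X\mapsto b^{r}X$, so that after $n-n'$ steps the prefix is $b^{rn+c}X$. I take as axiom the word $b^{rn'+c}\,X\,w\,\aA^{M}\,Y^{s_{n'}-M}\,R$, in which the constant block $\aA^{M}$ supplies the shift so that $\aA^{M}Y^{s_n'}$ stands for $\aA^{s_n}$. A final endomorphism $\nu$ fixing the terminals, with $X\mapsto\emptystring$, $R\mapsto\emptystring$ and $Y\mapsto\aA$, then erases the markers and turns the $Y$-block into $\aA^{s_n'}$, yielding $b^{rn+c}\,w\,\aA^{M}\,\aA^{s_n'}=b^{rn+c}\,w\,\aA^{s_n}$. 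Taking the rational control to be the regular set $\sigma^{*}\nu$, the resulting \E system has language exactly $\{b^{rn+c}w\aA^{s_n}\colon n\ge n'\}$. Here $n'\ge n_0$ is chosen so that $s_{n'}\ge M$; this is possible because the hypothesis $s_{n+1}>s_n$ forces $\{s_n\}$ to increase without bound, which also guarantees $s_n'\ge 0$ for all $n\ge n'$ so that the axiom and every intermediate word are genuine.

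The main obstacle is the mismatch between the two growth rates together with the non-deleting nature of free monoid endomorphisms: the prefix grows linearly while the suffix grows like $K^{n}$, and they must be advanced in lockstep by a single table, with an additive correction $\lambda$ that may be negative. Keeping the two regions in separate nonterminals handles the differing rates automatically, while the coordinate shift $s_n\mapsto s_n-M$ is the device that reduces a possibly negative $\lambda$ to a nonnegative increment that the marker $R$ can append; verifying that $\lambda'\ge0$ and that $n'$ can be chosen with $s_{n'}\ge M$ is then routine.
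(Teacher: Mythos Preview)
Your argument is correct and follows the same overall shape as the paper's: an axiom encoding the base case, a single ``step'' endomorphism iterated under rational control $\sigma^{*}\nu$, and a finishing map. The implementation differs in an instructive way. The paper keeps the suffix in the \emph{terminal} letter $a$ and inflates it via $a\mapsto a^{k^r}$, protecting $w$ by holding it as a nonterminal $S$ until the very end; for $\lambda<0$ it introduces an auxiliary letter $\sa$ with $\sa\mapsto a^{k^r-2}\sa$ so that the $\sa$-block of fixed size $|\lambda|$ feeds just enough new $a$'s each step to realise the negative increment. You do the dual thing: $w$ sits in terminals from the start, the suffix lives in a dedicated nonterminal $Y$, and a right marker $R$ injects the additive term. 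Your shift $s_n\mapsto s_n-M$ with $\lambda'=(K-1)M+\lambda\ge 0$ is essentially the same trick as the paper's $\sa$-device (the paper is taking $M=|\lambda|$), but your version has the pleasant feature of handling the cases $\lambda\ge 0$ and $\lambda<0$ uniformly, whereas the paper splits into two separate constructions.

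One small point you should make explicit: your axiom contains $b^{rn'+c}$, so you also need $rn'+c\ge 0$. Since $r>0$ this is automatic once $n'$ is taken large enough, and the lemma allows you to choose $n'$; just add this to the list of conditions on $n'$ alongside $n'\ge n_0$ and $s_{n'}\ge M$.
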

\begin{proof}
We prove this for $\aA = a$; the case $\aA = \ainv$ is similar.
	First, consider the case where \(\lambda\geq0\).
	Define endomorphisms $\phi,\psi$ of $\{a,\ainv,b,\binv,S\}^*$ as follows:
	\begin{align*}
		\phi&\colon S\mapsto b^rSa^\lambda,~a\mapsto a^{k^r} \\
		\psi&\colon S\mapsto w.
	\end{align*}
	Choose $n'\geq n_0$ large enough so that $rn'+c\geq0$. The \E system with axiom $b^{rn'+c}Sa^{s_{n'}}$
	and rational control $\cR=\phi^*\psi$ gives the language $\{b^{rn+c}w a^{s_n}\colon n\geq n'\}$.
	
	Now suppose that \(\lambda<0\). Choose \(n'\geq n_0\) large enough so that \(rn'+c\ge 0\) and so that \(s_{n'}>|\lambda|\). Consider an extended alphabet \(\{a,\ainv,\sa,b,\binv,S\}\), endomorphisms
	\begin{align*}
		\phi\colon & S\mapsto b^rS, a\mapsto a^{k^r}, \sa\mapsto a^{k^r-2}\sa \\
		\psi\colon & S\mapsto w, \sa\mapsto a,
	\end{align*}
	axiom \(b^{rn'+c}Sa^{s_{n'}-|\lambda|}\sa^{|\lambda|}\), and rational control \(\phi^*\psi\). We claim that this \E system produces the language in question. To see this, consider an application of \(\phi\):  \[b^{rn+c}Sa^{s_n-|\lambda|}\sa^{|\lambda|}\mapsto b^{r(n+1)+c} S a^{k^r(s_n-|\lambda|)}(a^{k^r-2}\sa)^{|\lambda|}.\]
	At each such an application, we increase the number of \(a\)'s from \(s_n-|\lambda|\) to \(k^r(s_n-|\lambda|) + (k^r-2)|\lambda| = k^rs_n - 2|\lambda| = s_{n+1}-|\lambda|\), and keep the number of \(\sa\)'s fixed at \(|\lambda|\).
	If this were the final application of \(\phi\), we would then apply \(\psi\) to convert the \(\sa\)'s to \(a\)'s, and insert \(w\):
	\[b^{r(n+1)+c} S a^{k^r(s_n-|\lambda|)}(a^{k^r-2}\sa)^{|\lambda|} \mapsto b^{r(n+1)+c}w a^{k^rs_n -|\lambda|}=b^{r(n+1)+c}wa^{s_{n+1}}.\]
\end{proof}

\section{Centralisers and conjugacy}
\label{sec:BScentConj}
\begin{proposition}
\label{prop:BScent}
Centralisers of elements in $\BS(1,k)$ are \E; that is, for fixed $g \in \BS$,
the set
$C_g := \{  w \in \NFB : w g =_\BS g w \}$ is \E.
\end{proposition}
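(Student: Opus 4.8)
The plan is to analyze the centraliser structure of $\BS$ group-theoretically first, and then encode each piece as an \E language, using Proposition~\ref{prop:fixedr} and Lemma~\ref{lem:expedt0l} as the main building blocks together with the closure properties in Lemma~\ref{lem:closure}. The key observation is that $\BS = N \rtimes \langle b\rangle$, where $N = \langle a\rangle^\BS$ is abelian (isomorphic to $\Z[1/k]$), and that centralisers in such a split metabelian group have a very constrained shape. I would split into two cases according to whether the fixed element $g$ lies in $N$ (i.e. $g = \aA^u$ with $b$-exponent zero) or has nonzero $b$-exponent.

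First I would treat the case where $g \in N$, say $g = \aA^u$ with $u \neq 0$. Since $N$ is abelian, $N$ itself centralises $g$, and conjugation by $b$ sends $a \mapsto a^k$, so an element $w = b^r \alpha^v$ centralises $g$ precisely when $b^r$ acts trivially on the relevant power of $a$; because the action of $b$ on $N$ is by multiplication by $k$ (an infinite-order automorphism away from $0$), this forces $r = 0$. Hence $C_g = N$ in this case, and I would show $N$ is \E by observing that $N$ corresponds to the words in $\NFB$ with $r = 0$, which is covered directly by Proposition~\ref{prop:fixedr} (taking $r=0$). The identity element $g = 1$ has $C_g = \BS = \NFB$, which is regular and hence \E by Lemma~\ref{lem:regedt0l}.

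The substantive case is $g = b^{r_0}\aA^u$ with $r_0 \neq 0$. Here the centraliser is infinite cyclic, generated by a primitive root of $g$; concretely an element commutes with $g$ iff it is a power of a fixed generator $h = b^{r_1}\alpha^{v}$ with $r_1 \mid r_0$. The plan is to compute the $\NFB$ normal form of $h^n$ for $n \in \Z$ explicitly: the $b$-exponent grows linearly as $r_1 n + c$, while the exponent of $\aA$ in the right subword satisfies a recurrence of exactly the form $s_{n+1} = k^{r_1} s_n + \lambda$ appearing in Lemma~\ref{lem:expedt0l} (this comes from repeatedly pushing the $b$'s through using $b^{-1}ab = a^k$ and collecting powers). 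The central subword $w$ eventually stabilises into a fixed constant word of the type required by that lemma. I would therefore split the set $\{h^n : n \in \Z\}$ into finitely many pieces — positive $n$, negative $n$, and a finite initial segment where the normal form has not yet reached its stable pattern — and apply Lemma~\ref{lem:expedt0l} (with $\aA$ and its inverse for the two signs of $r_1$) to the tails, handling the finitely many exceptional terms as a finite (hence regular, hence \E) language and combining via finite union.

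The main obstacle I expect is the bookkeeping in the normal-form computation for $h^n$: verifying that the $\aA$-exponents really do obey a linear recurrence of the prescribed shape with the monotonicity $s_{n+1} > s_n$, and that the central subword $w$ is genuinely \emph{constant} for large $|n|$ rather than growing with $n$. This requires carefully tracking how carries in base $k$ propagate when one multiplies normal forms, and identifying the threshold $n'$ beyond which the fractional part and central subword stop changing. A secondary subtlety is the negative-$n$ branch, where the roles of $a$ and $\ainv$ and the sign of the $b$-exponent reverse, so one must re-derive the recurrence for the inverse generator; and one must confirm that $r_1 \neq 0$ (guaranteed since $g$ has nonzero $b$-exponent) so that Lemma~\ref{lem:expedt0l}'s hypothesis $r > 0$ can be met after possibly replacing $h$ by $h^{-1}$.
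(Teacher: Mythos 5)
Your treatment of $g=1$, of $g \in N\setminus\{1\}$ (via Proposition~\ref{prop:fixedr} with $r=0$), and of the positive powers of a generator with positive $b$-exponent all match the paper's proof, which likewise reduces to the set of powers of a generator of the infinite cyclic centraliser and applies Lemma~\ref{lem:expedt0l} there. But there is a genuine gap in how you propose to handle the other half of the centraliser: the powers $h^{-n}=(h^{-1})^n$, $n\ge 0$, where $h^{-1}$ has negative $b$-exponent. You claim one can ``re-derive the recurrence for the inverse generator'' and that the hypothesis $r>0$ of Lemma~\ref{lem:expedt0l} ``can be met after possibly replacing $h$ by $h^{-1}$,'' but this replacement is vacuous: whichever generator you choose, one of the two tails $\{h^n : n\ge 0\}$ and $\{h^{-n} : n\ge 0\}$ consists of powers of an element whose fractional representation $b^r\aA^u$ has $r<0$, and for that tail the hypotheses of Lemma~\ref{lem:expedt0l} fail irreparably. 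Indeed, when $r<0$ the exponents $u_n = u(1+k^r+\cdots+k^{(n-1)r})$ are bounded above by $u/(1-k^r)$, so the right-subword exponents $s_n=\lfloor u_n\rfloor$ are non-decreasing and bounded, hence eventually \emph{constant}: there is no strictly increasing recurrence $s_{n+1}=k^rs_n+\lambda$ (nor is $k^r$ even an integer). Worse, the central subword is not eventually constant, as your outline asserts; each successive power gains a fixed positive number of components, so the central subword grows without bound. No swap of generator, or of the roles of $a$ and $\ainv$, produces the shape $\{b^{rn+c}\,w\,\aA^{s_n}\}$ with constant $w$ that Lemma~\ref{lem:expedt0l} requires.

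The missing idea is that this branch needs a different argument altogether: since $s_n$ is eventually constant and the trailing components of the growing central subword eventually repeat, the set of normal forms of these powers is in fact a \emph{regular} language (for large $n$ one is simply pumping a periodic block inside the central subword), and hence \E by Lemma~\ref{lem:regedt0l}. This is Case 1 of the paper's proof, and it accounts for roughly half of the work there; your proposed argument would stall exactly at that point.
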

\begin{proof}

This follows from Lemma~\ref{lem:regedt0l} when $g = 1$, because then
$C_g = \NFB$, which is a regular language.

If  $1 \ne g \in N$, then $C_g = N$, and $C_g$ consists of those words in $\NFB$
with $t=m$, which is \E by Proposition~\ref{prop:fixedr}.

If $g \not\in N$, then $C_N(g) = 1$, so $C_\BS(g)$ is an infinite cyclic group
$\langle g'\rangle$, for which $g' \not \in N$ and $C_\BS(g)=C_\BS(g')$.
In this case we replace $g$ by $g'$, for ease of notation.

So we need to prove that $\{ \NFB(g^n) : n \in \Z \}$ is \E and, since this is
a union of the positive and negative powers of $g$, it is enough
(by Lemma~\ref{lem:closure}) to prove that $\{ \NFB(g^n) : n \ge 0 \}$ is \E.

We use the notation introduced above for $w := \NFB(g)$, and we have
$w = b^{t'} w_c \aA^s$, where $b^{t'}$, $w_c$ and $\aA^s$ are its left,
central and right subwords, with $\aA = a$ or $\ainv$.
(So $t'=t$ when $t \ge 0$ and $t'=0$ when $t<0$.)
Note that the fractional representation of $w$ is of the form $b^r\aA^u$
with $r:=t-m$ and $0 \le u \in \Q$, and so that of $g^n$ is
$b^{rn}\alpha^{u_n}$ with $u_n = u(1+k^{r} + \cdots + k^{(n-1)r})$.
We consider two cases.

{\bf Case 1}: $t < m$ (so $w$ has more occurrences of $\binv$ than of $b$).
So $t-m < 0$.
Since the power of $\binv$ in the fractional representation of $g^n$ is
$\binv^{n(m-t)}$, the central subword of $\NFB(g^n)$ must have at least
$n(m-t)$ components and so there exists $n' \ge 0$ such that this
number of components is at least $t'$ for all $n \ge n'$.

For $n \ge n'$, let $\NFB(g^n) = b^{t'_n}w_{cn} \aA ^{s_n}$, where
$w_{cn}$ is its central subword, and let  $w_{cn} = \gamma_n\beta_n$, where
$\beta_n$ consists of its final $t'$ components.
Note that the sequence $u_n$ is bounded above by $u/(1-k^r)$ in this
case, and so $s_n$ is also bounded above.
We claim  that $\NFB(\beta_n \aA^{s_n} w)$ has empty left subword and that
its central subword has exactly $m$ components. From this it follows that
$\NFB(g^{n+1}) = b^{t'_n} \gamma_n \NFB(\beta_n \aA^{s_n} w)$, so
$w_{c,n+1}$ has $m-t'$ more components than $w_{cn}$. 
  
To prove the claim, note that, since $\beta_n$ has exactly $t'$ components,
$\beta_n \aA^{s_n} b^{t'}$ is equal in $\BS$ to some power  $\aA^{x_n}$ of $\aA$
 with $x_n \in \Z$ and $x_n \ge 0$. Then, when we put
$\beta_n \aA^{s_n} w =_\BS \aA^{x_n} w_c \aA^s$ into normal form, we may move
some of the $m$ letters $\binv$ in $w_c$ to the left, but we end up with a central
subword consisting of $m$ components followed by a power of $\aA$
(in fact $\aA^{s_{n+1}}$).

Now, by considering the fractional representation of $g^n$ described above
and noting that $r<0$ in this case, we see that $s_n$, which is the integral
part of $u_n$, is a non-decreasing sequence which (as we observed earlier) is
bounded above, and so $s_n$ must be constant for sufficiently large $n$.
Also, $\beta_n$ and hence also the suffix $\beta_n \aA^{s_n}$ of
$\NFB(g^n)$ must eventually repeat, and it follows easily that
the set $\{ \NFB(g^n) : n \in \Z \}$ is regular, and hence \E by
Lemma~\ref{lem:regedt0l}.

{\bf Case 2}: $t > m$ (so $w$ has more occurrences of $b$ than of $\binv$).  
Then $r = t-m > 0$.
Recall that we are denoting the fractional representations of $g$
and $g^n$ by $b^r \alpha^u$ and $b^{rn} \alpha^{u_n}$, respectively,
where $u_n = u(1 + k^r + \cdots + k^{(n-1)r})$. Since $m$ is the number of
components in the central subword of $\NFB(g)$, we have $k^m u \in \Z$.
So, by choosing $n' >0$ such that $(n'-1)r \ge m$, we have
$k^{(n-1)r}u \in \Z$ for all $n \ge n'$. Then the fractional
part of $u_n$ remains constant for all $n \ge n'$, and hence the central
subword of $\NFB(g^n)$ is the same word $w_\ct'$ for all such $n$,
and also the sequence $s_n = \lfloor u_n \rfloor$ is strictly increasing
for $n \ge n'$.

So for $n \ge n'$, if $\NFB(g^n) = b^{t_n} w_\ct' \aA^{s_n}$, then
$$b^{t_{n+1}} w_\ct' \aA^{s _{n+1}} =_\BS
 b^{t_n} w_\ct' \aA^{s_n} b^r\aA^{u}$$
and so $t_{n+1} = t_n+r$ and
$$\aA^{s _{n+1}} =_\BS w_\ct'^{-1}b^{-r}w_\ct' \aA^{s_n}b^r \aA^{u} =_\BS
[w_\ct',b^r] \aA^{k^rs_n+u}.$$

Now the commutator $[w_\ct',b^r]$ is some fixed power $\aA^\kappa$ of $\aA$
for some $\kappa \in \Q$, and $s_{n+1} = k^r s_n + \lambda$ where
$\lambda := u + \kappa$ is a constant that must lie in $\Z$.
%Now the fractional representation of $w_\ct'$ has the form
%$\binv^m\aA^l$ for some $l$ with $0 \le l < 1$, and so
%$[w_\ct',b^r] = [\aA^l,b^r] = \aA^{k^rl-l}$, and so $\kappa = k^rl-l>0$ and,
%since $u \ge 0$, we must have $s_{n+1}>s_n$,
%so $s_n$ is a strictly increasing sequence for $n \ge n'$.

It follows from $t_{n+1}=t_n+r$ that $t_n = rn+c$ for some constant $c \in \Z$.
Since we also have $s_{n+1} = k^r s_n + \lambda$, we can apply
Lemma~\ref{lem:expedt0l}
with $w = w_\ct'$ to deduce that $\{ \NFB(g^n) : n \ge n' \}$  and
hence also $\{ \NFB(g^n) : n \geq 0\}$ is \E.
\end{proof}

\begin{proposition}
\label{prop:conj}
The set of conjugators of fixed pairs of elements is \E;
that is, for fixed $g,h \in \BS$,
the set $\Con_{hg} := \{  w \in \NFB : w h =_\BS g w \}$ is \E.
\end{proposition}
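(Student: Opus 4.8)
The plan is to reduce the conjugacy statement to the centraliser statement already proved in Proposition~\ref{prop:BScent}. The key structural observation is that the set of conjugators $\Con_{hg} = \{ w \in \BS : wh =_\BS gw \}$ is either empty or a coset of the centraliser $C_\BS(h)$. Indeed, if $w_0$ is any single element satisfying $w_0 h =_\BS g w_0$, then for any other conjugator $w$ we have $w h w^{-1} =_\BS g =_\BS w_0 h w_0^{-1}$, whence $w_0^{-1} w$ centralises $h$; conversely every element of $w_0 C_\BS(h)$ is a conjugator. So the first step is to decide (as a finite computation, using the solvability of the conjugacy problem in $\BS$, or directly via the normal forms) whether $g$ and $h$ are conjugate at all. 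If they are not, then $\Con_{hg}=\emptyset$, which is trivially regular and hence \E by Lemma~\ref{lem:regedt0l}.

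First I would, in the nonempty case, fix one explicit conjugator $w_0$ with $w_0 h =_\BS g w_0$, so that as a subset of $\BS$ we have $\Con_{hg} = w_0\,C_\BS(h)$. Proposition~\ref{prop:BScent} tells us that the normal-form language $C_h = \{ \NFB(c) : c \in C_\BS(h)\}$ is \E. The remaining task is to pass from the language of normal forms of the centraliser $C_\BS(h)$ to the language of normal forms of the left-translated coset $w_0 C_\BS(h)$. This is exactly where the main work lies: left multiplication by the fixed element $w_0$ is \emph{not} a free monoid homomorphism on the normal-form language, since putting $\NFB(w_0 c)$ into normal form involves carrying in base $k$ and shifting exponents, so I cannot simply invoke closure of \E languages under homomorphism images (Lemma~\ref{lem:closure}).

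The hard part will therefore be showing that the map $\NFB(c) \mapsto \NFB(w_0 c)$ can be realised within the \E framework. I expect the cleanest route is to reuse the machinery of Proposition~\ref{prop:BScent} directly rather than to manipulate its output language: following the structure of that proof, I would split $C_\BS(h)$ into the cases $h=1$ (where $C_\BS(h)=\BS$ and the coset is regular), $1\ne h\in N$, and $h\notin N$. In the last case $C_\BS(h)$ is the infinite cyclic group $\langle h'\rangle$, and the coset consists of the elements $w_0 (h')^n$ for $n\in\Z$. Writing $\NFB(w_0 (h')^n)$ and computing its left, central and right subwords as functions of $n$, I would find (exactly as in Cases~1 and~2 of Proposition~\ref{prop:BScent}) that the $b$-exponent is eventually linear in $n$, the central subword is eventually periodic, and the integral exponent $s_n$ satisfies a recurrence of the form $s_{n+1}=k^r s_n+\lambda$; the only effect of the extra prefix $w_0$ is to alter the finitely many constants $c$, $\kappa$, $\lambda$ and the initial data. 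I would then apply Lemma~\ref{lem:expedt0l} with these adjusted constants to obtain an \E language for the positive powers, handle the negative powers symmetrically, and take the finite union via Lemma~\ref{lem:closure}. The coset case $1\ne h\in N$ is handled by Proposition~\ref{prop:fixedr} together with a finite case analysis of how $w_0$ shifts the value of $r$, and the regular cases go through Lemma~\ref{lem:regedt0l}.
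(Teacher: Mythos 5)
Your proposal is correct, and its overall skeleton (reduce to a coset of a centraliser, case split on whether the relevant element lies in $N$, then invoke Proposition~\ref{prop:fixedr}, Lemma~\ref{lem:regedt0l} and Lemma~\ref{lem:expedt0l}) matches the paper's; but the key step is executed by a genuinely different route. The paper writes $\Con_{hg}$ as a \emph{right} coset $C_g x$ of the centraliser of $g$, takes the decomposition $C_g = C_1 \cup C_2$ exactly as output by Proposition~\ref{prop:BScent}, and then analyses how right multiplication by the arbitrary fixed element $x$ transforms words of those known shapes; this forces a case division on the sign of $r_x$, and when $r_x<0$ an argument that $s_n \bmod k^{-r_x}$ is eventually constant, so that $\lfloor k^{r_x}s_n \rfloor$ again satisfies a recurrence of the required type. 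You instead write $\Con_{hg}$ as a \emph{left} coset $w_0\,C_\BS(h) = \{w_0 (h')^n : n \in \Z\}$. That orbit is generated by right multiplication by the \emph{same} fixed element $h'$ that generates the centraliser, so the inductive mechanism of Proposition~\ref{prop:BScent} re-runs with only the initial term changed: writing $w_0(h')^n =_\BS b^{r_0+rn}\aA^{v_n}$, one gets $v_{n+1} = k^r v_n + u$, the identical recurrence satisfied by the centraliser exponents. What your route buys is that the paper's $r_x \ge 0$ versus $r_x < 0$ dichotomy and its floor/congruence computation disappear, since $w_0$ enters only through the constants $r_0$ and $v_0 = u_0$. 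What it costs is that Proposition~\ref{prop:BScent} cannot be cited as a black box: you must repeat its Case 1 argument (regularity, now using that $v_n$ converges monotonically when $r<0$, so its integer part is eventually constant and the short suffix eventually repeats) and its Case 2 argument with the new initial data.

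Two small repairs are needed, neither a genuine gap. First, decidability of the conjugacy problem is not required: for the language-theoretic conclusion it suffices to split into the cases $\Con_{hg}=\emptyset$ (trivially \E) and $\Con_{hg}\neq\emptyset$ (fix any solution $w_0$), as the paper does. Second, your claim that the prefix $w_0$ ``only alters the constants'' needs qualifying in Case 2: since $v_n - u/(1-k^r) = k^{rn}\bigl(u_0 - u/(1-k^r)\bigr)$, the sequence $v_n$ may be negative for some or all $n$ (so the generator $\aA$ is $\ainv$ rather than $a$, or flips for finitely many $n$), and in the degenerate case $u_0 = u/(1-k^r)$ (for example $w_0 = b^{r_0}\ainv$ and $h'=ba$ in $\BS(1,2)$) it is constant, so the hypothesis $s_{n+1}>s_n$ of Lemma~\ref{lem:expedt0l} fails and one must observe instead that the resulting language $\{b^{r_0+rn}\aA^{s} : n \ge n'\}$ is regular. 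These exceptional situations are easily absorbed by finite unions via Lemma~\ref{lem:closure}, but a complete write-up must treat them.
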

\begin{proof}

The solution set is either empty or a right coset $C_g x$ of the centraliser
$C_g$ of $g$, for some fixed $x \in \BS$. Let $b^{r_x} \aA^{u_x}$
(where $u_x$ might be negative) be the fractional representation of $x$.

If $g=1$ then $\Con_{hg} = \NFB$ when $h=1$ and is empty otherwise and,
if $g \in N \setminus \{1\}$, then $C_g = N$, and $\Con_{hg}(hg)$ is either
empty, or equal to the set of normal form words for which $t-m = r_x$.
The result holds in these cases by Lemma~\ref{lem:regedt0l} and
Proposition~\ref{prop:fixedr}.

Otherwise, as we saw in the previous proof, $C_g = C_1 \cup C_2$ is the
disjoint union of two sets, where $C_1$ (from Case 1) is regular and $C_2$ is the union of
a finite set with a set $C_2'$ of the form
$\{ b^{rn+c} w_\ct' \aA^{s_n} : n \ge n' \}$ for some fixed $n' \ge 0$
where $r>0$ and, for $n \ge n'$, we have $s_{n+1} = k^r s_n + \lambda$
and $c,\lambda \in \Z$ are constants.

Suppose that $\Con_{hg}$ is nonempty. Then, by a similar argument to that used
in the proof of Case 1 of Proposition~\ref{prop:BScent}, we see that,
for all but finitely many of the words in $C_1$, multiplication on the right by
$x$ affects only a suffix of  bounded length, and so $\NFB(C_1x)$ is regular
and hence  \E by Lemma~\ref{lem:regedt0l}.

It remains to consider the set $\NFB(C_2'x)$ with $C_2'$ as above.
If $r_x \ge 0$ then, for $n \ge n'$ and $g =b^{rn+c}w_\ct'\aA^{s_n} \in C_2$,
we have $gx =_\BS b^{rn+c} w_\ct'b^{r_x}\aA^{u_x} \aA^{k^{r_x}s_n}$.
Now $\NFB(b^{rn+c} w_\ct'b^{r_x}\aA^{u_x}) = b^{rn+c'} w_\ct'' \aA^{\lambda_x}$
for some constant central subword $w_\ct''$ and $c,\lambda_x \in \Z$
and so, for sufficiently large $n$, we have
$\NFB(gx) = b^{rn+c'} w_\ct'' \aA^{s_n'}$, where
$s_n' = k^{r_x}s_n + \lambda_x$. Then $s_n'$ also satisfies the recurrence
relation $s_{n+1}' = k^r s_n' + \lambda'$ for some constant $\lambda' \in \Z$,
and so the set $\NFB(C_2'x)$ is \E by Lemma~\ref{lem:expedt0l}.

Now suppose that $r_x < 0$. Since, for $t \ge 0$, we have
$s_{n+t} = k^{rt}s_n + (k^{r(t-1)} + \cdots + k^r + 1)\lambda$, we see that
$s_{n+t} \bmod k^{-r_x}$ is constant for all sufficiently large $t$
(i.e. such that $rt \ge -r_x$). So, for sufficiently large $n$, we have
$\NFB(\aA^{s_n} b^{r_x}) = w_\ct'' \aA^{\lfloor  k^{r_x}s_n \rfloor}$ for
some fixed central subword $w_\ct''$, and hence, by splitting $u_x$ into its
integral and fractional parts, we see that
$\NFB(gx) = b^{rn+c'} w_\ct''' \aA^{s_n'}$ for fixed central subword $w_\ct'''$,
where $s_n' = \lfloor k^{r_x}s_n \rfloor + \lambda_x$ for constants
$c',\lambda_x \in \Z$.  Then since, as we saw above, $s_n \bmod k^{-r_x}$ is
constant for sufficiently large $n$, we have $s_{n+1}' = k^r s_n' + \lambda'$
for some constant $\lambda' \in \Z$, for sufficiently large $n$,
and the result follows from Lemma~\ref{lem:expedt0l}.
\end{proof}

\section{Multiplication and inversion}
\label{sec:BSmultInv}
In this section we shall prove the following two results.
Note that (informally) the second of these results says that the
multiplication table of the group with respect to $\NFB$ is \E.
This result was partly motivated by a related result of Gilman,
who proved in \cite{Gilman02} that a group is hyperbolic if and only if its
multiplication table is context-free with respect to some regular normal form.

\begin{theorem}\label{thm:BSmult}
The language $\{x\#y\#z : x,y,z \in \NFB,\, xy=_\BS z \}$ is \E.
\end{theorem}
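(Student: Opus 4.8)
The plan is to encode the entire graph of multiplication as a single \E language by building a grammar whose tables simultaneously construct a compatible triple $(x,y,z)$ together with the arithmetic witnessing $xy =_\BS z$. I would first reduce to a bounded collection of cases by conditioning on the signs of the $b$-exponents of $x$, $y$ and on the sign of the resulting $b$-exponent of $z$, since these determine which subwords (left, central, right) are nonempty and how the central subwords interact under multiplication. The central difficulty is that multiplying $x = \beta^{r_x}\aA^{u_x}$ by $y = \beta^{r_y}\aA^{u_y}$ requires conjugating $\aA^{u_y}$ by $b^{r_x}$: in fractional terms $z$ has $b$-exponent $r_x+r_y$ and exponent $u_z = u_x + k^{r_x}u_y$. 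When $r_x \ge 0$ this scales $u_y$ by $k^{r_x}$ (shifting its base-$k$ digits), and when $r_x<0$ it divides, truncating digits and forcing carries. Tracking these digit shifts and carries across the radix point, while keeping the three words synchronized, is the main obstacle.

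My strategy for this is to let the tables operate in lock-step on three copies of the production machinery, one per word, separated by the markers $\#$, with the axiom of the form (roughly) $S_x \# S_y \# S_z$ and a terminating table $\nu$ that erases all nonterminals at once. Reusing the devices of Proposition~\ref{prop:fixedr}, each factor's central subword is grown digit-by-digit by a $\psi$-type table that prepends a $b$ and appends a digit block $\aA^{i_j}\binv$; the key new ingredient is that a \emph{single} table application must append digit $i_j$ to $x$, digit $i'_j$ to $y$, and the correct digit of $z$, where the $z$-digit is determined by the base-$k$ addition $u_x + k^{r_x}u_y$ with a carry. Since there are only finitely many digit values and a bounded carry state, all admissible local transitions form a finite set, and the requirement that digits align correctly (and that carries propagate) can be enforced by a regular constraint on the sequence of tables — this is exactly where the rational control $\cR$ does the work, using that \E languages are closed under intersection with regular languages (Lemma~\ref{lem:closure}). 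The integral parts of $x,y,z$, which are honest powers $\aA^{s_x},\aA^{s_y},\aA^{s_z}$, are handled by $\mu$-type tables, with the relation among $s_x,s_y,s_z$ again a base-$k$ addition-with-carry that a regular control can track.

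Concretely I would organize the construction so that the digits are produced from the radix point outward in a common direction, allowing the carry to be a finite piece of state carried in the rational control while each table emits one matched triple of digit-blocks. The finitely many sign-cases are then combined by closure under finite union (Lemma~\ref{lem:closure}). The step I expect to be genuinely delicate is the case $r_x<0$, where conjugation truncates $u_y$: here the low-order fractional digits of $y$ are discarded and may generate carries into the surviving digits of $z$, so the grammar must be allowed to \emph{consume} digit-blocks of $y$ without emitting corresponding blocks of $z$, while still recording their contribution to the carry. Verifying that this asymmetric, lossy digit bookkeeping is expressible by a finite table set under regular control — and that the resulting synchronized language is genuinely \E rather than merely indexed — is where I would concentrate the real effort; the remaining cases should then follow by the same template with the roles of shifting and truncation interchanged.
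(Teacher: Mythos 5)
Your overall architecture --- three synchronized tracks separated by $\#$'s, digit-blocks emitted in lock-step, a bounded carry held in the rational control, a finite sign-based case split recombined by finite union (Lemma~\ref{lem:closure}) --- is essentially the paper's architecture. But the proposal contains a wrong formula, misidentifies where the difficulty lies, and omits the device that actually makes the $\NFB$ version work. First, with the paper's conventions ($b^{-1}ab=a^k$, normal form $b^r\aA^u$) one has $a^{u_x}b^{r_y}=b^{r_y}a^{k^{r_y}u_x}$, so $u_z=k^{r_y}u_x+u_y$: it is $u_x$ that gets scaled, by $k^{r_y}$; your formula $u_z=u_x+k^{r_x}u_y$ is the exponent of $yx$, not of $xy$. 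Second, the ``genuinely delicate'' lossy case you propose to concentrate on does not exist: multiplication by $k^{r}$ with $r<0$ is exact in $\Z[1/k]$ --- it merely shifts the radix point, so no digits of either factor are ever discarded and the shift itself creates no carries; the fractional part of $z$ just gets longer. (The paper simply shifts the radix point of $\fx$ by $r_y$ places before performing the addition or subtraction.) The genuine delicacies are elsewhere: when $u_x$ and $u_y$ have opposite signs the system must perform subtraction and must commit in advance to the sign of $u_z$ (whence the paper's $24$ cases), and in the $\NFB$ construction one must ensure that the total power of $b$ inserted into $x'y'$ equals that inserted into $z'$, which requires the careful bookkeeping of when a $b$ goes into $x'$, $y'$, $z'$, or none of them.

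The gap that would actually sink your construction concerns the right subwords. In $\NFB$ these are unary strings $\aA^{s}$, and the constraint to be enforced is $s_z=k^{r_y}s_x+s_y+(\hbox{contribution of the fractional carries})$ with $r_y$ unbounded. The $\mu$-type tables of Proposition~\ref{prop:fixedr} append one letter per application, and no bounded carry state in a rational control can force $k^{r_y}$ letters to be appended to $z'$ for each letter appended to $x'$: that would require unbounded counting by what is, in effect, a finite automaton reading the table sequence. What is needed is the dummy-symbol technique of Lemma~\ref{lem:leftmulta} (and Lemma~\ref{lem:expedt0l}): variables $\sa,\sA$ placed inside the unary blocks are expanded multiplicatively (rules of the form $\sa\mapsto\sa^k$, $a\mapsto a^k$) once per digit step, so that a single table application rescales an entire unary block; the base-$k$ digits are then fed in least-significant-first and the dummies erased at the end. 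This conversion from base-$k$ strings to unary exponents is precisely what separates the theorem about $\NFB$ from the comparatively easy statement about $\NFFB$, and it is absent from your proposal. (A smaller point: the digit alignment is enforced by the rational control itself, which is part of the definition of an \E system; it is not an application of closure of \E languages under intersection with regular languages.)
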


\begin{theorem}\label{thm:BSmultinv}
The language $\{x\#y\#z : x,y,z \in \NFB,\, xy=_\BS z^{-1} \}$ is \E.
\end{theorem}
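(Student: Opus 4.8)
The plan is to exploit the identity \(xy =_\BS z^{-1} \iff y^{-1}x^{-1} =_\BS z\) (equivalently \(xyz =_\BS 1\)), which ties the language of Theorem~\ref{thm:BSmultinv} to the multiplication table of Theorem~\ref{thm:BSmult}. Write \(L = \{x\#y\#z : x,y,z\in\NFB,\ xy =_\BS z\}\), which is \E by Theorem~\ref{thm:BSmult}, and write \(\rho(w) := \NFB(w^{-1})\) for inversion on normal forms. Then the target language is exactly \(\{\rho(v)\#\rho(u)\#w : u\#v\#w \in L\}\). First I would record the elementary structural fact underlying \(\rho\): if \(w = b^{t} \aA^{i_m}\binv\cdots\aA^{i_1}\binv\aA^s \in \NFB\) (say \(t\ge 0\), the case \(t<0\) being analogous), then the word obtained by reversing \(w\) and swapping \(a\leftrightarrow\ainv\) and \(b\leftrightarrow\binv\) freely reduces to a word equal in \(\BS\) to \(w^{-1}\). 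Passing to fractional representations, this says that if \(w\) has fractional representation \(b^r\aA^u\) then \(w^{-1}\) has fractional representation \(b^{-r}\aA^{-u/k^{r}}\); so inversion flips the sign of the \(b\)-exponent and rescales the \(\aA\)-exponent by a power of \(k\).

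The naive hope is to deduce Theorem~\ref{thm:BSmultinv} from Theorem~\ref{thm:BSmult} purely by closure, applying \(\rho\) to two of the three coordinates of \(L\). This fails: \(\rho\) reverses order and does not respect concatenation once normal forms are reimposed, so it is not a free monoid homomorphism, and realising the correspondence above would require closure under preimages of homomorphisms (or relational composition along the shared coordinate), which holds for \NE but is not among the \E closure operations of Lemma~\ref{lem:closure}. Instead I would fold the inversion into a direct construction: run the same case analysis as in the proof of Theorem~\ref{thm:BSmult} — splitting on the signs of the \(b\)-exponents of \(x\), \(y\), and of the product, and on whether base-\(k\) carries propagate — but arrange the endomorphisms generating the third \(\NFB\)-block to emit \(\NFB((xy)^{-1})\) rather than \(\NFB(xy)\).

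The key technical content, and the step I expect to be the main obstacle, is the renormalisation forced by inversion. After reversal the former integral part \(\aA^s\) of \(xy\) (a long run of a single generator) is shifted across the radix point by the factor \(k^{-r}\) above and becomes interleaved with the \(b\)-letters, so returning \(w^{-1}\) to \(\NFB\) requires genuine base-\(k\) carrying together with the borrowing produced by the sign change. As in Case~2 of Proposition~\ref{prop:BScent}, this bookkeeping manifests as an exponent sequence obeying a recurrence \(s_{n+1} = k^{r}s_n + \lambda\) with \(\lambda\in\Z\); the heart of the argument is to verify that the relevant sequences have exactly this shape and then to realise each resulting infinite family by an \E system via Lemma~\ref{lem:expedt0l}, taking its constant word \(w\) to be the (now fixed) reversed central subword. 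The finitely many degenerate and boundary configurations yield regular languages, hence \E by Lemma~\ref{lem:regedt0l}, and the whole language is assembled from these pieces using the closure properties of Lemma~\ref{lem:closure}.
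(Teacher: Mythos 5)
Your overall skeleton---abandoning a closure-theoretic reduction (you are right that $\rho$ is not a free monoid homomorphism and that the \E closure properties of Lemma~\ref{lem:closure} do not suffice) and instead folding the inversion into the digit-by-digit, sign-split construction of Theorem~\ref{thm:BSmult}---is indeed the paper's route, and your formula $w^{-1}=_\BS b^{-r}\aA^{-u/k^r}$ for fractional representations is the correct starting point. But your identification of the ``key technical content'' is where the proposal goes wrong, in two linked ways. First, inversion forces no renormalisation at all: writing $\bar z = \NFB(xy)$, the paper observes that $u_z = -k^{-(r_x+r_y)}u_{\bar z}$, so the base-$k$ digit string of $u_z$ is \emph{identical} to that of $u_{\bar z}$; negation is free because the sign is carried by the choice of $a$ versus $\ainv$ (or $+$ versus $-$ in $\NFFB$), and multiplication by $k^{-(r_x+r_y)}$ merely shifts the radix point. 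There is no ``borrowing produced by the sign change''. The only carrying in the entire construction is the carrying already needed to compute $k^{r_y}u_x+u_y$, exactly as in Theorem~\ref{thm:BSmult}; what is genuinely new is bookkeeping: the phases in which $b$ or $\binv$ must be inserted at the left ends of the three words change, and the case analysis grows from $24$ to $36$ cases because the sign of $r_x+r_y$ now matters.

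Second, and more seriously, Lemma~\ref{lem:expedt0l} cannot be the engine of this proof. That lemma produces one-parameter families $\{b^{rn+c}w\aA^{s_n} : n\ge n'\}$ in which the central subword $w$ is a fixed constant word and $s_n$ is a single sequence satisfying $s_{n+1}=k^rs_n+\lambda$; that is the right tool for centralisers and conjugators (Propositions~\ref{prop:BScent} and~\ref{prop:conj}), whose solution sets are essentially cosets of cyclic groups. The language of Theorem~\ref{thm:BSmultinv} has no such structure: $x$ and $y$ vary independently over all of $\NFB$, so the central subwords of all three coordinates range over infinitely many distinct digit strings, and the language is not a finite union of such one-parameter families together with regular sets. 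In particular your parenthetical ``(now fixed) reversed central subword'' is false---the central subword of $z$ varies with $x$ and $y$. The tool the paper actually uses for the unary (right-subword) parts is the one from Lemma~\ref{lem:leftmulta}: dummy variables $\sa,\sA$ repeatedly raised to $k$-th powers (rules of the form $\sa\mapsto\sa^k a^i$) so as to convert base-$k$ digits into the runs $\aA^s$, with carries recorded in the states of the rational control, applied simultaneously to all three coordinates of $x\#y\#z$. Without replacing your recurrence step by this mechanism, the proposed proof does not go through.
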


In the proofs, we first prove that the corresponding subsets of $\NFFB$ are \E,
which reduces essentially to addition and subtraction of numbers in base $k$.
In order to derive a corresponding proof for $\NFB$, we need to convert 
 a positive integer $n$ written in base $k$ to the string $\alpha^n$
 (with $\alpha = a$ or $\ainv$) and then simulate the above addition and subtraction on the
 exponents of these strings. It might be helpful to illustrate the 
conversion process, from $\NFFB$ to $\NFB$, in the special case $x=a$, $y=b^t \alpha^s$, and we shall
do that case first.

\begin{lemma}\label{lem:leftmulta}
The set
\[L:=\{ a\#u\#v: v= \NFB(au),\,u = b^t \aA^s,\,\aA \in \{a,\ainv\},
\,t,s \in \N_0 \}\]
is \E.
\end{lemma}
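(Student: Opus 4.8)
The plan is to reduce membership of a triple $a\#u\#v$ in $L$ to an explicit combinatorial description of $v$ in terms of the two parameters $t,s$, and then to build \E systems whose multiplicative tables (of the shape $a\mapsto a^k$) generate the resulting families. First I would compute $au$ in $\BS$ from the relation written as $ab=ba^k$, which iterates to $ab^t=b^t a^{k^t}$; hence $a\,b^t\aA^s=_\BS b^t a^{k^t}\aA^s$. For $\aA=a$ this gives $\NFB(au)=b^t a^{k^t+s}$, while for $\aA=\ainv$ it gives $b^t a^{k^t-s}$, whose normal form is $b^t a^{k^t-s}$ when $s\le k^t$ and $b^t\ainv^{\,s-k^t}$ when $s>k^t$. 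Since the first field is the constant $a$ and $\NFB(u)=b^t\aA^s$, the set $L$ splits, according to $\aA$ and the sign of $k^t-s$, into finitely many pieces, and by closure under finite union (Lemma~\ref{lem:closure}) it suffices to show each piece is \E.

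For $\aA=a$ I would use nonterminals $A,P,Q$, with $A\to a$ supplying the $a^{k^t}$ block and $P,Q\to\emptystring$ marking the two $b$-blocks, and axiom $a\#P\#QA$. A table $\phi$ incrementing $t$ acts by $P\mapsto bP$, $Q\mapsto bQ$, $A\mapsto A^k$ (fixing all terminals, so the leading $a$ and the later $a^s$-blocks are untouched); a table $\psi$ incrementing $s$ adjoins one $a$ to the $a^s$-block of each of the last two fields; and a terminal table $\nu$ erases $P,Q$ and sends $A\mapsto a$. With rational control $\phi^*\psi^*\nu$ this produces exactly $\{\,a\# b^t a^s\# b^t a^{k^t+s}: t,s\ge0\,\}$, so this piece is \E.

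The main obstacle is the case $\aA=\ainv$ with $s<k^t$, where $v=b^t a^{k^t-s}$: the third field must hold a \emph{unary} block of length $k^t-s$, yet no single endomorphism can delete $s$ individual letters from a block of length $k^t$. I would avoid subtraction entirely by using complementary base-$k$ digits. Writing $s=\sum_{i=1}^{t}g_i k^{t-i}$ with $g_i\in\{0,\dots,k-1\}$ realises every $s\in[0,k^t-1]$, and then $k^t-1-s=\sum_{i=1}^{t}(k-1-g_i)k^{t-i}$. So I would run $t$ steps, the $i$-th using a table $\sigma_g$ (one per digit $g$) that prepends a $b$ to both $b$-blocks, multiplies both running blocks by $k$, and simultaneously adjoins $g$ copies of a nonterminal $Y$ (to become $\ainv$, in the second field) and $k-1-g$ copies of a nonterminal $Z$ (to become $a$, in the third field): concretely $\sigma_g\colon P\mapsto bPY^{g}$, $Q\mapsto bQZ^{\,k-1-g}$, $Y\mapsto Y^{k}$, $Z\mapsto Z^{k}$, from axiom $a\#P\#Qa$. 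The digit recurrences give $Y$-count $=s$ and $Z$-count $=k^t-1-s$, and the lone terminal $a$ in the third field supplies the missing $+1$; after the terminal table $\nu\colon P,Q\mapsto\emptystring,\ Y\mapsto\ainv,\ Z\mapsto a$ the invariant $\#\{\ainv\}+\#\{a\}=k^t$ holds and the third field becomes $b^t a^{k^t-s}$. With control $(\sigma_0\mid\cdots\mid\sigma_{k-1})^*\nu$ this piece is \E.

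The two remaining pieces for $\aA=\ainv$ are easy variants of the same construction: for $s=k^t$ the third field is empty and one only builds a single multiplied block $\ainv^{k^t}$ (via $Y\mapsto Y^k$) in the second field; for $s>k^t$, writing $s=k^t+e$ with $e\ge1$, I would build $\ainv^{k^t}$ by multiplication and then a shared $\ainv^{e}$ adjoined to both fields by a second table incrementing $e$, yielding second field $b^t\ainv^{k^t+e}$ and third field $b^t\ainv^{e}$. Each is \E by an argument as above, and assembling all the pieces through Lemma~\ref{lem:closure} shows $L$ is \E. In the full write-up I expect the only delicate points to be verifying the digit recurrences and the off-by-one bookkeeping around the constant $a$, both of which become routine once the complementary-digit invariant is in place.
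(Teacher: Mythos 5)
Your proposal is correct, and its overall shape matches the paper's proof: both partition $L$ according to $\aA$ and the comparison of $s$ with $k^t$, dispose of the pieces with $v = b^t a^{k^t+s}$ and $v = b^t\ainv^{s-k^t}$ by planting dummy symbols that a table multiplies by $k$ and a final table erases, and assemble $L$ by closure under finite union (Lemma~\ref{lem:closure}). The genuine difference is in the hard piece $\{\,a\#b^t\ainv^s\#b^ta^{k^t-s} : 0\le s < k^t\,\}$. The paper simulates schoolbook subtraction with borrows, processing the digits of $s$ from least significant to most significant; the complement rule there differs at the first nonzero digit ($s_j' = k-s_j$) from the later ones ($s_i' = k-1-s_i$), which forces a three-phase rational control $\alpha^*(\beta_1\mid\cdots\mid\beta_{k-1})(\gamma_0\mid\cdots\mid\gamma_{k-1})^*\delta$. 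You instead use the radix-complement identity $k^t-s=\bigl((k^t-1)-s\bigr)+1$: every digit is complemented uniformly to $k-1-g_i$, and the $+1$ is supplied by a terminal $a$ planted in the axiom, so a single family of tables $\sigma_g$ with the one-phase control $(\sigma_0\mid\cdots\mid\sigma_{k-1})^*\nu$ suffices. What each buys: the paper's version is a direct simulation of the subtraction algorithm, and serves as the warm-up for the carry/borrow machinery reused in Theorem~\ref{thm:BSmult} and in the explicit system of Section~\ref{sec:BSexplicit}; yours trades that for a cleaner control with no case distinction on the position of the first nonzero digit. The opposite digit orders (you go most-significant-first, with the inserted nonterminals $Y,Z$ themselves multiplied at later steps; the paper goes least-significant-first, with inserted terminals left fixed and only $\sa,\sA$ multiplied) are immaterial—both mechanisms give the correct digit weights, and both verifications reduce to the same geometric-sum identity.
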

\begin{proof}
We do this by partitioning $L$ into three subsets, and prove that
each of these is \E. The result then follows from Lemma~\ref{lem:closure}.

The first of these subsets is
$\{ a\#u\#v \colon u = b^t a^s, s,t \ge 0, v=\NFB(au)\} =
\{ a \# b^t a^s \# b^t a^{s+k^t} : s,t \ge 0 \}$. To show that this set is \E,
we need extra symbols $\sa$ and $T$. We start with the start word 
$T \# \sa \# \sa a$ and apply $\sct_1^*\sct_2$, for tables $\sct_1 := 
\{ \sa \to b \sa, a \to a^k \}$ and $\sct_2 := \{ T \to a \}$. 
This produces the word $a\#b^t\sa\#b^t\sa a^{k^t}$, where $t \ge 0$ is the
number of applications of $\sct_1$.  
%Then we repeatedly apply the rule
%$\sa\rightarrow a\sa$, and finish by applying $\sa\rightarrow \emptystring$.
Then we apply $\sct_3^*\sct_4$, for tables $\sct_3 := \sa\rightarrow a\sa$
and  $\sct_4 :=\sa\rightarrow \emptystring$.

Note that $ab^t\ainv^s =_\BS b^t a^{k^t - s}$, so
$\NFB(ab^t\ainv^s) = b^t \ainv^{s - k^t}$ when $s \ge k^t$, and
$\NFB(ab^t\ainv^s) = b^t a^{k^t - s}$ when $k^t > s$.

Our second of the three subsets is
$\{ a\#b^t\ainv^s\#  b^t \ainv^{s-k^t} : s, t\ge 0, s \ge k^t \}$.
To show that this
is \E, observe that we can construct arbitrary words in this language by first
constructing words of the form $a\# b^t \ainv^{k^t}\sA \# b^t\sA$, using a
similar construction as in the preceding case.
Then we apply $\sct_3^*\sct_4$ for tables $\sct_3 := 
\{ \sA\rightarrow \ainv\sA \}$ and $\sct_4 := \{\sA\rightarrow \emptystring \}$.

Proving that our third subset,
$\{ a\#b^t\ainv^s\#  b^t a^{k^t-s} : s,t>0, k^t > s \}$ is \E is more difficult,
and this is the case that we are using to illustrate how we simulate
subtraction of base $k$ numbers. (In fact the previous two cases can also be
done using this technique, but they were easier to do directly.)

We shall describe a recipe for constructing all words of this form, which is
based on the idea of  carrying out the subtraction $k^t-s$ using the
representations of $k^t$ and $s$ in base $k$.

Let $s = s_0 + s_1k + \cdots + s_{t-1}k^{t-1}$ be the
expansion of $s$ in base $k$.  Then, for some $j$ with $0 \le j \le t-1$,
we have $s_i = 0$ for $0 \le i < j$, and $s_j \ne 0$
(we are assuming that $s>0$).
Then $k^t-s = s_0' + s_1'k + \cdots + s_{t-1}'k^{t-1}$, where:
\begin{mylist}
\item[(i)] $s_i' = 0$ for $0 \le i < j$;
\item[(ii)] $s_j' = k-s_j$;
\item[(iii)] $s_i' =  k-s_i-1$ for $j<i \le t-1$.
\end{mylist}

The construction of this word involves symbols $\sa$ and $\sA$
(variables in the associated \E system), which represent potential occurrences
of $a$ and $\ainv$, and will eventually be deleted.
The construction consists of $t$ steps, numbered $1,\ldots,t$.

As axiom we use the word $a \# \sA\#\sa$.

In Step $i$, for $1 \le i \le t$, we do the following:
\begin{mylist}
\item[(i)] apply the rule $\# \rightarrow \#b$;
\item[(ii)] apply the rule $\sA \rightarrow \sA^k \ainv^{s_{i-1}}$;
\item[(iii)] apply the rule $\sa \rightarrow \sa^k a^{s'_{i-1}}$.
\end{mylist}
Apply $\sa\rightarrow \emptystring$, and $\sA\rightarrow \emptystring$. 

Note that since $s_i,s'_i \in \{0,1,\ldots,k-1\}$ the rules we apply come from a finite set.

More formally, define tables as follows. 
\begin{align*}
  \alpha\colon & \#\mapsto\# b,~\sA\mapsto\sA^k,~\sa\mapsto\sa^k \\
  \delta\colon &\sa\mapsto\varepsilon,~\sA\mapsto\varepsilon
\end{align*}
and for each \(0\leq m\leq k-1\):
\begin{align*}
	\beta_m\colon & \#\mapsto\# b,~\sA\mapsto\sA^k\ainv^m,~\sa\mapsto\sa^k a^{k-m} \\
	\gamma_m\colon & \#\mapsto\# b,~\sA\mapsto\sA^k\ainv^m,~\sa\mapsto\sa^k a^{k-m-1} .\\
\end{align*}
Then the \E system with the following rational control produces the required language in the way described above:
\[\alpha^*(\beta_1\mid\beta_2\mid\cdots\mid \beta_{k-1})(\gamma_0,\gamma_1,\ldots,\gamma_{k-1})^*\delta.\]

\end{proof}

\begin{proofof}{Theorem~\ref{thm:BSmult}}
For words $x,y,z \in \NFB$, we denote the words $\NFFB(x)$, $\NFFB(y)$ and
$\NFFB(z)$ by $\fx$, $\fy$ and $\fz$, respectively.
We prove first that the language
$\LF = \{\fx\#\fy\#\fz : x,y,z \in \NFB,\, xy=_\BS z \}$ is \E,
and then explain how to adapt the arguments to prove the theorem.

Let $x,y,z \in \NFB$ with $xy=_\BS z$, and suppose that the fractional
representations of $x$ and $y$ are $b^{r_x} a^{u_x}$ and $b^{r_y} a^{u_y}$,
respectively (where $r_x, r_y, u_x$ and $u_y$ could be positive or negative).
Then $z$ has fractional representation $b^{r_x+r_y} a^{u_z}$ with
$u_z = k^{r_y}u_x+u_y$.
As usual, we aim to construct the language $\LF$ using an \E system, starting
with the word $\ud \# \ud \#\ud$.
Recall that $\fx$ consists of $b^{r_x}$ or $\binv^{-r_x}$ followed by the base $k$
representation of $u_x$ written backwards, and similarly for $\fy$ and $\fz$.

There are various cases to be considered, depending on the signs of
$r_x$, $r_y$, $u_x$ and $u_y$.
We need to partition $\LF$ into a large number of disjoint subsets depending
on these signs, and the \E systems that define these subsets are all slightly
different. If $u_x$ and $u_y$ have different signs, then the sign of
$u_z = k^{r_y}u_x+u_y$ may be positive or negative, and we need to distinguish
between those cases.
So there are $24$ principal cases. Each of $r_x$ and $r_y$ can be
non-negative or negative, and for each of these four possibilities there are
six subcases: $u_x,u_y \ge 0$;
$u_x,u_y \le 0$; $u_x>0,u_y<0,u_z\ge 0$; $u_x>0,u_y<0,u_z< 0$; 
$u_x<0,u_y>0,u_z\ge 0$; and $u_x<0,u_y>0,u_z< 0$.

The fractional and integral parts of $\fz$ are computed by addition or
subtraction (depending on the signs of $u_x$ and $u_y$) of those of
$\fx$ and $\fy$, where the radix point in that of $\fx$
is shifted $r_y$ places to the left (or $-r_y$ places to the right)
before performing this operation.
Each of the possible combinations of signs of $u_x$, $u_y$ and $u_z$
constitutes a separate subcase, and in the first step of the construction
we insert the signs of $u_x$, $u_y$ and $u_z$ for the subcase that we are
dealing with.

In the subsequent steps, we carry out the addition or subtraction of
the base $k$ numbers, dealing with one base $k$ digit in each step,
working from left to right (i.e. from the smallest power of $k$ to the largest).
Some of these operations will result in a ``carry one'' that needs to be
handled in the usual way in the following step. So these ``carry'' steps
must be followed by a further step that might itself be a carry step.
(In the formal \E system, we need extra variables to indicate that the
next step should be a ``carry'' step. We have left out the details of that
process in this proof, but we will present an explicit system for one case of
the language $L$ in Subsection~\ref{sec:BSexplicit} below.)

After completing this addition or subtraction, there may be some further steps
in which powers of $b$ or of $\binv$ are inserted at the left of
$\fx'$ and $\fz'$
(where  $\fx'\#\fy'\#\fz'$ denotes the word that has been constructed so far).

Rather than attempting formal proofs in all cases, we shall content ourselves
with providing the constructions of $\fx\#\fy\#\fz$ for three illustrative
examples.  The value of $k$ is not critical, and we take $k=3$ in our examples.

The easiest situation is when $r_y=0$, and we are just calculating
$u_x + u_y$ in base $k$ arithmetic. Suppose, for example, that
$\fx = \binv \up 11\ud 21$ and $\fy= \um 101 \ud 2$,
so $x = b a\binv a\binv a^5 =_\BS \binv a^{49/9}$, $y= b^3 \ainv \binv \binv \ainv \binv  \ainv ^2 =_\BS a^{-64/27}$.
Hence  $z =_\BS \binv a^{83/27}$, $\fz = \binv  \up 200 \ud 01$, and and $z = b^2 a^2\binv ^3a^3$.
Then the construction is

{\footnotesize
\[
\begin{array}{cccccc}
\ud \# \ud \#\ud&\to&
 \up\ud \# \um\ud  \# \up\ud &\to\\
 \up\ud \# \um1 \ud  \# \up2 \ud &\to&
 \up1 \ud \# \um10 \ud  \# \up20 \ud &\to\\
 \up11 \ud \# \um101 \ud  \# \up200 \ud  &\to&
 \up11 \ud 2\# \um101 \ud 2 \# \up200 \ud  0 &\to\\
 \up11 \ud 21\# \um101 \ud 2 \# \up200 \ud  01 &\to&
\binv  \up11 \ud 21\# \um101 \ud 2 \# \binv \up200 \ud 01
\end{array}
\]
}

In this example, there is just one carry step, namely
$ \up\ud \# \um\ud  \# \up\ud \to \up\ud \# \um1 \ud  \# \up2 \ud$.

Now let us keep the same $\fx$, but replace $\fy$ by $b \um101 \ud 2$, so
now $y = b^4 \ainv \binv\binv\ainv \binv \ainv^2 =_\BS ba^{-64/27}$,
$z = b^3 a^2\binv a^2\binv a^2\binv a^{13} =_\BS  a^{377/27}$ and $\fz = \up 222\ud 111$.
Now the construction is

{\footnotesize
\[
\begin{array}{cccccc}
\ud \# \ud \#\ud&\to&
\up  \ud \# \um \ud  \# \up\ud &\to\\
\up \ud \# \um1 \ud  \# \up2 \ud &\to&
\up \ud \# \um10 \ud  \# \up22 \ud  &\to\\
\up1 \ud \# \um101 \ud  \# \up222 \ud  &\to&
\binv \up11 \ud \#  b \um101 \ud 2 \# \up222 \ud 1 &\to\\
\binv \up11 \ud 2\# b \um101 \ud 2 \# \up222 \ud 11 &\to&
\binv \up11 \ud 21\# b \um101 \ud 2 \# \up222 \ud 111 %&\to&
\end{array}
\]
}

Note that, when $r_y > 0$, we insert a symbol $b$ at the beginning of $\fy'$ in
each of the $r_y$ steps  in which we are processing fractional parts of $x$
and integral parts of $y$. If there are symbols $\binv$ to be entered at the
beginning of $\fx'$, then (as we did in one of the steps in the example above)
we insert a $\binv$ in $\fx'$ in the same step as the $b$ in $\fy'$;
otherwise we would insert a symbol $b$ at the beginning of $\fz'$.
In this example the first four subtraction steps are all carry steps.

As illustrated in the first and following example, any remaining occurrences
of $b$ or $\binv$ at the beginning of $\fx$ can be inserted into
$\fx'$ and $\fz'$ at the end of the construction.

Now we consider an example with $r_y < 0$, and with changed signs
for $x$, $y$ and $z$, namely
$\fx = \binv \um21\ud 112$ and $\fy=\binv \up221 \ud 01$,
so $x = b\ainv^2\binv\ainv \binv \ainv^{22} =_\BS \binv a^{-203/9}$,
$y =  b^2 a^2\binv a^2\binv a\binv a^3 = _\BS \binv a^{98/27}$,
$z = \ainv^2\binv\ainv^2\binv \ainv^{3} =_\BS \binv^2 a^{-35/9}$ and $\fz = \binv^2 \um22 \ud 01$.
The construction is

{\footnotesize
\[
\begin{array}{cccccc}
\ud \# \ud \#\ud&\to&
 \um \ud \# \up \ud  \# \um\ud &\to\\
 \um2 \ud \# \up2\ud  \#   \um\ud &\to&
 \um21 \ud \# \up22 \ud  \# \um2 \ud &\to\\
 \um21 \ud 1 \# \binv \up221 \ud  \# \binv \um22 \ud &\to&
 \um21 \ud 11\# \binv \up221 \ud 0 \# \binv \um22 \ud 0 &\to\\
 \um21 \ud 112\# \binv \up221 \ud 01 \# \binv \um22 \ud 01 &\to&
\binv \um11 \ud 112\# \binv \up221 \ud 01 \# \binv^2 \um22 \ud 01
\end{array}
\]
}

Here we inserted $\binv$ into $\fy'$ in the step dealing with the integral
part of $\fx$ and the fractional part of $\fy$. Since there is no $b$ at the
beginning of $\fx$, we insert $\binv$ into $\fz'$ at the same time.

Now we turn to the  proof that the language
$L = \{x\#y\#z : x,y,z \in \NFB,\, xy=_\BS z \}$ in the theorem statement
is \E.  Again we denote the subwords of $x,y,z$ that have been inserted into
$x\#y\#z$ so far by $x',y',z'$.

The words $x',y',z'$ are constructed in the same way and roughly in the
same order as the corresponding subwords $\fx',\fy',\fz'$ of $\fx,\fy,\fz$;
that is, for each step in the process of constructing $\fx',\fy'$ and $\fz'$,
there is a corresponding step in the construction of $x',y'$ and $z'$.

There are two principal issues that arise here. One of these involves
the insertion of the right subwords of the words $x, y, z \in \NFB$.
These subwords are strings in the generators $a$ or $\ainv$ of whichh
lengths are represented by numbers in base $k$ in $\fx,\fy,\fz$.
The integral part of $\fz$ is calculated from $\fx$ and $\fy$ by
addition or subtraction of numbers in base $k$.
The corresponding process for $x,y$ and $z$ can be carried out using the
method illustrated in the proof of Lemma~\ref{lem:leftmulta} that involves
the use of dummy symbols $\sa$ and $\sA$ that will be deleted at the end of
the process.

The second issue concerns the insertion of the central subwords, together with
parts of the left subwords, of $x,y,z$ into $x',y',z'$. In general,
in a step in which we insert digits $i$ with $0 \le i < k$ into the fractional
parts of each of $\fx'$, $\fy'$ and $\fz'$, we insert $a^i\binv$ into the
central subwords of $x,y$ and $z$ at their right hand ends, immediately before
the radix point. However, we want to ensure that the same total
powers of $b$ are inserted into $x'y'$ as into $z'$, which we do as follows.

In such a step, if there are letters $b$ in the left subwords of $x$ and/or $y$
that have not yet been inserted into $x'$ and/or $y'$, then we insert $b$ into
$x'$ and/or $y'$ in the same step.  If this involves inserting $b$ into both
$x'$ and $y'$ then, since we are also inserting terms $a^i\binv$ into
$x'$ and $y'$, the total power of $b$ inserted in $x'y'$ is zero and,
since we are inserting a term $a^i\binv$ into $z'$, we insert a $b$ in the
left subword of $z'$ to ensure that the total power of $b$ entered into $z'$
is also zero. But if we insert $b$ into just one of $x'$ and $y'$ then the total
power of $b$ inserted in $x'y'$  is $-1$, and we do not insert $b$ into $z'$.

In some steps, we may be processing digits in the fractional part of
one of $\fx$ and $\fy$ and in the integral part of the other, and again we
just need to ensure that we insert the same total power of $b$ into
$x'y'$ and into $z'$.  We would be in trouble if there was no $b$ to insert
either into $x'$ or into $y'$ and we had to insert a term $a^i\binv$ into
$z'$, but in fact that never happens.
That situation could only arise when $r_y < 0$, and in that case we would
be combining a fractional digit of $\fy'$ with an integral digit of $\fx'$,
so we would not be changing the fractional part of $x'$. Note also that if
the above process should involve inserting $b\binv$ at the beginning of the word
$z'$ then we would of course not do that (that situation arises in the third
example below).

As was the case with $\fx',\fz'$, any remaining occurrences of $b$ or $\binv$
in the left subwords of $x',z'$ can be inserted at the end of the process.

Let us now illustrate the procedure with the same three examples as before.
As in the proof of Lemma~\ref{lem:leftmulta}, we use dummy symbols
$\sa_x,\sa_y,\sa_z$ and $\sA_x,\sA_y,\sA_z$, which will eventually be removed,
to represent future possible instances of $a$ and of $\ainv$ in
$x, y, z$, respectively. (In fact we have shortened the process
by removing these dummy symbols in the final step of the rest of the procedure
rather than in a separate final step at the end.)
The first of the examples was
$x = b a\binv a\binv a^5$, $y= b^3 \ainv \binv\binv\ainv \binv \ainv^2$,
$z = b^2 a^2\binv^3a^3$.
The construction is

{\footnotesize
\[
\begin{array}{cccccc}
\# \#&\to& \# b\ainv \binv  \# ba^2\binv &\to\\
ba\binv \# b^2\ainv \binv\binv \# b^2a^2\binv^2&\to&
  ba\binv a\binv \# b^3\ainv \binv\binv\ainv \binv \# b^2a^2\binv^3&\to\\
ba\binv a\binv a^2 \sa_x^3 \# b^3\ainv \binv\binv\ainv \binv \ainv^2 \sA_y^3  \# b^2a^2\binv^3\sa_z^3 &\to&
  ba\binv a\binv a^5 \# b^3\ainv \binv\binv\ainv \binv \ainv^2  \# b^2a^2\binv^3 a^3 &
\end{array}
\]
}

The second example is $x = b a\binv a\binv a^5$, $y = b^4 \ainv \binv\binv\ainv \binv \ainv^2$,
$z = b^3 a^2\binv a^2\binv a^2\binv a^{13}$.

{\scriptsize
\[
\begin{array}{cccccc}
\# \#&\to& \# b\ainv \binv  \# ba^2\binv &\to\\
\# b^2\ainv \binv\binv  \# b^2a^2\binv a^2\binv &\to& 
  ba\binv \# b^3\ainv \binv\binv\ainv \binv  \# b^3 a^2\binv a^2\binv a^2\binv &\to\\ 
ba\binv a\binv \# b^4\ainv \binv\binv\ainv \binv \ainv^2\sA_y^3  \# b^3a^2\binv a^2\binv a^2\binv a \sa_z^3 &\to& 
  ba\binv a\binv a^2 \sa_x^3 \# b^4\ainv \binv\binv\ainv \binv \ainv^2\sA_y^9  \# b^3a^2\binv a^2\binv a^2\binv a^4\sa_z^9&\to \\
ba\binv a\binv a^5 \# b^4\ainv \binv\binv\ainv \binv \ainv^2   \# b^3a^2\binv a^2\binv a^2\binv a^{13} &
\end{array}
\]
}

The third example is $x = b\ainv^2\binv\ainv \binv \ainv^{22}$, $y =b^2 a^2\binv a^2\binv a\binv a^3$,
$z = \ainv^2\binv\ainv^2\binv \ainv^{3}$.

{\scriptsize
\[
\begin{array}{cccccc}
\# \#&\to&   b\ainv^2\binv\# ba^2\binv  \#  &\to\\
b\ainv^2\binv\ainv \binv \# b^2a^2\binv a^2\binv  \# \ainv^2\binv  &\to& 
  b\ainv^2\binv\ainv \binv \ainv\sA_x^3 \# b^2a^2\binv a^2\binv a\binv  \# \ainv^2\binv\ainv^2\binv  &\to\\ 
b\ainv^2\binv\ainv \binv \ainv^4 \sA_x^9 \# b^2a^2\binv a^2\binv a\binv \sa_y^3  \# \ainv^2\binv\ainv^2\binv  \sA_z^3  &\to& 
b\ainv^2\binv\ainv \binv \ainv^{22}  \# b^2a^2\binv a^2\binv a\binv a^3  \# \ainv^2\binv\ainv^2\binv \ainv^3  &
\end{array}
\]
}

An explicit \E system for one case of the proof is presented in
Subsection~\ref{sec:BSexplicit}
\end{proofof}

\begin{proofof}{Theorem~\ref{thm:BSmultinv}}
Let $x,y,z \in \NFB$ with $xy=_\BS z^{-1}$ and let $\bar{z} = \NFB(xy)$.
Then, as we saw in the previous theorem, the fractional representations of
$x$ and $y$ can be written as  $x=_\BS b^{r_x}a^{u_x}$, $y=_\BS b^{r_y}a^{u_y}$,
and then $\bar{z} =_\BS b^{r_x+r_y} a^{u_{\bar{z}}}$ with $u_{\bar{z}} = k^{r_y}u_x+u_y$.

Now we have $z =_\BS \bar{z}^{-1} =_\BS a^{-u_{\bar{z}}} b^{-(r_x+r_y)} = 
b^{-(r_x+r_y)} a^{u_z}$ with $u_z = - k^{-(r_x+r_y)}u_{\bar{z}}$.
So $u_{z}$ has the opposite sign to that of $u_{\bar{z}}$, and the digits of
its (reversed) base $k$ expansion are the same as those of $\bar{z}$, but the
radix point is shifted $r_x+r_y$ places to the right or $-(r_x+r_y)$ places to
the left.

Let $L :=  \{x\#y\#z : x,y,z \in \NFB,\, xy=_\BS z^{-1} \}$ 
and $\LF :=  \{\fx\#\fy\#\fz : \fx,\fy,\fz \in \NFFB,\, xy=_\BS z^{-1} \}$ 
The processes involved in proving first that $\LF$ and then that $L$ are \E
are similar to those in the previous proof, with the added complication that
power $b^{r_x}$ of $b$ in $\fx$ plays a more significant role. 

The constructions of $x\#y\#z$ and of $\fx\#\fy\#\fz$ using \E systems split
up into four phases, some of which may be empty in some examples. In the first
phase we are constructing parts of the fractional parts of $\fx$, $\fy$ and $\fz$,
and in the fourth phase we are constructing parts of their integral parts.
In the second and third phases, we are constructing  parts of the
fractional parts of some of them and of the integral parts of the others.
As in the previous proof, we denote the words constructed so far by
$x'\#y'\#z'$ and $\fx'\#\fy'\#\fz'$.

In the construction of $x,y,z$, in each step in the first phase we
insert a $b$ at the beginning of each of $x',y',z'$, and $\alpha^i\binv$ for some
$0 \le i < k$ at the end of its fractional part, but with the usual proviso
that we avoid inserting $b\binv$ at the beginning of the word.

Unlike in the proof of Theorem~\ref{thm:BSmult}, in the situation when
$r_x$ and $r_y$ have opposite, there are two cases depending on the sign of
$r_x+r_y$. So there are six possibilities for $r_x$ and $r_y$ to be
considered, each of which splits into six subcases for $u_x$ and $u_y$
as in Theorem~\ref{thm:BSmult}. So there are $36$ cases in total.
We shall describe phases 2 and 3 in more detail in three of these
possibilities for $r_x,r_y$, and provide
the constructions for an illustrative example with $k=3$ in each case.

Suppose first that $r_x$ and $r_y$ are both non-negative.
In the second phase, we are constructing parts of the fractional parts of
$\fx$ and $\fz$, and of the integral part of $\fy$. This phase consists of $r_y$
steps, in each of which we insert a $b$ at the beginning of each of $x'$,
$y'$ and $\fy'$, a $\binv$ at the beginning of $\fz'$, and one term $a^i\binv$ at the
end of the fractional parts of each of $x'$ and $z'$.

In the third phase, we are constructing parts of the integral parts of $\fx$ and
$\fy$, and of the fractional part of $\fz$. This phase consists of $r_x$ steps,
in each of which we insert a $b$ at the beginning of $x'$ and of $\fx'$, a $\binv$
at the beginning of $\fz'$, and a term $a^i\binv$ at the end of the fractional part
of $z'$.

As an example, we take $x=b^3\ainv \binv\ainv^2$, $y=b^2a^2\binv a$, so $\fx = b^2 \um1\ud 2 $,
$\fy = b\up2\ud 1$, $\fz =\binv^3 \up 1210 \ud$, $z = ba\binv a^2\binv a\binv^2$.

Here are the derivations of $\fx\#\fy\#\fz$ and of $x\#y\#z$. There is one step in
each of the first two phases, two steps in the third phase, and none in the
fourth phase. (So the integral part of $\fz$ is $0$.)
We have saved space by suppressing the first step in which signs are
entered into $\fx',\fy'$ and $\fz'$.

{\small
\[
\begin{array}{cccccc}
\um\ud \# \up\ud \# \up\ud&\to&
 \um \ud \# \up2 \ud \# \up1 \ud&\to\\
\um1 \ud \# b \up2 \ud 1 \# \binv \up12 \ud &\to&
  b\um1 \ud 2\# b \up2 \ud 1 \# \binv^2 \up121 \ud &\to\\
b^2\um1 \ud 2\# b^2 \up2 \ud 1 \# \binv^3 \up1210 \ud &
\end{array}
\]
}

{\small
\[
\begin{array}{cccccc}
\#\# & \to&
  \# ba^2\binv \# ba\binv & \to\\
b\ainv \binv \# b^2 a^2\binv a \# ba\binv a^2\binv & \to&
  b^2\ainv \binv \ainv^2 \# b^2a^2\binv a \# ba\binv a^2\binv a\binv & \to\\
b^3\ainv \binv \ainv^2 \# b^2a^2\binv a \# ba\binv a^2\binv a\binv^2 &
\end{array}
\]
}

Suppose next that $r_x > 0$ and $r_y < 0$ and that $r_x > |r_y|$.
In the second phase, we are constructing parts of the fractional parts of
$\fy$ and $\fz$, and the integral part of $\fx$.  This phase consists of $|r_y|$
steps, in each of which we insert a $b$ at the beginning of each of $x'$, $\fx'$
and $z'$, a $\binv$ at the beginning of $\fy'$, and one term $a^i\binv$ into the end of
the fractional parts of each of $y'$ and $z'$.

In the third phase, we are constructing parts of the integral parts of $\fx$ and
$\fy$, and the fractional part of $\fz$. This phase consists of $r_x-|r_y|$ steps,
in each of which we insert a $b$ at the beginning of $\fx'$ and $x'$,
a $\binv$ at the beginning of $\fz'$, and a term $a^i\binv$ at the end of the
fractional part of $z'$.

As example, we take $x=b^3\ainv \binv\ainv^5$, $y=b\ainv \binv\ainv^2\binv\ainv^8$, so
$\fx = b^2 \um1\ud 21$, $\fy = \binv \um12\ud 22$,
$\fz =\binv \up 211 \ud 01$, $z = b^2 a^2\binv a\binv a\binv a^3$.

In the derivations below, there is one step in each of
the first three phases, and two in the fourth phase.

{\small
\[
\begin{array}{cccccc}
  \um\ud \# \um\ud \# \up\ud &\to&
  \um1 \ud \# \um1 \ud \# \up2 \ud &\to\\
b \um1 \ud 2\# \binv \um12 \ud \# \up21 \ud &\to&
b^2 \um1 \ud 21\# \binv \um12 \ud 2 \#  \binv \up211 \ud &\to\\
b^2 \um1 \ud 21\# \binv \um12 \ud 22 \#  \binv \up211 \ud 0 &\to&
b^2 \um1 \ud 21\# \binv \um12 \ud 22 \#  \binv \up211 \ud 01
\end{array}
\]
}

{\footnotesize
\[
\begin{array}{cccccc}
\#\# & \to &
  b\ainv \binv \# b\ainv \binv \# ba^2\binv & \to  \\
b^2 \ainv \binv \ainv^2 \sA_x^3 \# b\ainv \binv\ainv^2\binv \# b^2 a^2\binv a\binv &\to &
  b^3 \ainv \binv \ainv^5 \# b\ainv \binv\ainv^2\binv \ainv^2 \sA_y^3  \#  b^2 a^2\binv a\binv a\binv &\to \\
b^3 \ainv \binv \ainv^5 \# b\ainv \binv\ainv^2\binv \ainv^8  \#  b^2 a^2\binv a\binv a\binv \sa_z^3 &\to&
 b^3 \ainv \binv \ainv^5 \# b\ainv \binv\ainv^2\binv \ainv^8  \#  b^2 a^2\binv a\binv a\binv a^3
\end{array}
\]
}

The final case that we shall consider in detail is $r_x > 0$ and $r_y < 0$
with $r_x < |r_y|$.
As in the previous case that we considered, in the second phase, we are
constructing parts of the fractional parts of $\fy$ and $\fz$, and the integral
part of $\fx$.  But now this phase consists of $r_x$ steps, in each of which
we insert a $b$ at the beginning of each of $x'$, $\fx'$ and $z'$, a $\binv$ at the
beginning of $\fy'$, and one term $a^i\binv$ at the end of the fractional parts of
each of $y'$ and $z'$.

In the third phase, we are constructing parts of the integral parts of
$\fx$ and $\fz$, and the fractional part of $\fy$. This phase consists of
$|r_y|-r_x$ steps, in each of which we insert a $b$ at the beginning of
of $z'$ and $\fz'$, a $\binv$ at the beginning of $\fy'$, and a term $a^i\binv$ at the
end of the fractional part of $y'$.

As example, we take $x=b^3\ainv \binv\ainv^5$, $y=\binv a^2\binv a\binv a^5$, so
$\fx = b^2 \um1\ud 21$, $\fy = \binv^3 \up21\ud 21$,
$\fz =b \um 200 \ud 121$, $z = b^4 \ainv^2\binv\binv\binv \ainv^{16}$.

In the derivations below, there is one step in the first and third phases,
and two in the second and fourth phases.

{\footnotesize
\[
\begin{array}{cccccc}
\um\ud \# \up\ud \# \um\ud &\to&
  \um1 \ud \# \up\ud \# \um2 \ud &\to\\
b \um1 \ud 2 \# \binv \up\ud \# \um20 \ud &\to&
  b^2 \um1 \ud 21 \# \binv^2 \up2 \ud  \# \um200 \ud &\to\\
b^2 \um1 \ud 21 \# \binv^3 \up21 \ud  \# b \um200 \ud 1 &\to&
  b^2 \um1 \ud 21 \# \binv^3 \up21 \ud 2  \# b \um200 \ud 12 &\to\\
b^2 \um1 \ud  21 \# \binv^3 \up21 \ud  21  \# b \um200 \ud  121
\end{array}
\]
}

{\footnotesize
\[
\begin{array}{cccccc}
\#\# & \to &
  b\ainv \binv \# \# b\ainv^2\binv & \to \\
b^2 \ainv \binv \ainv^2 \sA_x^3 \# \binv \# b^2 \ainv^2\binv^2 & \to &
  b^3 \ainv \binv \ainv^5 \# \binv a^2\binv \# b^3 \ainv^2\binv^3 & \to \\
b^3 \ainv \binv \ainv^5 \# \binv a^2\binv a\binv \# b^4 \ainv^2\binv^3 \ainv \sA_z^3 & \to &
  b^3 \ainv \binv \ainv^5 \# \binv a^2\binv a\binv a^2\sa_y^3 \# b^4 \ainv^2\binv^3 \ainv^7 \sA_z^9 & \to \\
b^3 \ainv \binv \ainv^5 \# \binv a^2\binv a\binv a^5 \# b^4 \ainv^2\binv\binv\binv \ainv^{16}
\end{array}
\]
}

\end{proofof}

\section{Explicit system}\label{sec:BSexplicit}
We explicitly construct one of the \E systems described in the proof of Theorem \ref{thm:BSmult}, for the case \(\BS(1,3)\). The system described in this section generates the language
\[\{x\#y\#z\colon x,y,z\in\NFB,~u_x<0,u_y>0,u_z<0\}.\]
Systems that generate the corresponding languages for different signs of \(u_x, u_y, u_z\) are analogous, with the only difference occurring in the tables \(\alpha_{ij}, \beta_{ij}, \gamma_{ij}, \delta_{ij}\).

\begin{itemize}
	\item Alphabet: $\tilde{X},\tilde{Y},\tilde{Z},X,Y,Z,Z_1, X_b,Y_b,Z_b,\sA_X,\sa_Y,\sA_Z,\sA_{Z1},a,\ainv,b,\binv,\#$
	\item Terminals: $a,\ainv,b,\binv,\#$
	\item Axiom: $\tilde{X}\#\tilde{Y}\#\tilde{Z}$
	\item Tables: $\alpha_{ij}, \beta_{ij}, \gamma_{ij}, \delta_{ij}$ (for $i,j\in\{0,1,2\}$, defined in Appendix \ref{ap:tables}), 
	\begin{align*}
		\sigma\colon & \tilde{X}\mapsto \binv X, X\mapsto\binv X, \tilde{Z}\mapsto\binv Z, Z\mapsto\binv Z \\
		\rho_{X}\colon & X_b\mapsto bX_b \\
		\rho_{Y}\colon & Y_b\mapsto bY_b \\
		\rho_{XY}\colon & X_b\mapsto bX_b, Y_b\mapsto bY_b, Z_b\mapsto bZ_b\\
		\mu\colon & X_b\mapsto bX_b, Z_b\mapsto bZ_b \\
		\tau \colon & \tilde{X},X,X_b, \tilde{Y},Y,Y_b, \tilde{Z},Z,Z_b, \sA_X,\sa_Y,\sA_Z \mapsto \emptystring\\
	\end{align*}
\item Rational control: as in Figure~\ref{fig:RatControl}
\end{itemize}
Figure~\ref{fig:RatControl} is a schematic diagram of the finite state automaton defining the rational control. Labelled edges are single transitions in the FSA as usual. Unlabelled edges represent multiple transitions (each starting and ending at the same states as the unlabelled edge) as follows, where \(x\) is replaced with \(\alpha,\beta,\gamma,\delta\) as indicated in the corresponding dashed box. The arrow with the open triangle represents a transition from a non-carry state to a non-carry state, the open square represents non-carry to carry, the filled triangle carry to carry, and the filled square carry to non-carry.
\begin{center}
\begin{tabular}{cl}
	\includegraphics[width=35mm]{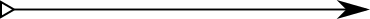} & 6 transitions, labelled \(x_{00}\mid x_{10}\mid x_{11}\mid x_{20}\mid x_{21}\mid x_{22}\)\\
	\includegraphics[width=35mm]{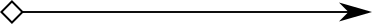} & 3 transitions, labelled \(x_{01}\mid x_{02}\mid x_{12}\)\\
	\includegraphics[width=35mm]{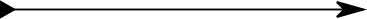} & 6 transitions, labelled \(x_{00}\mid x_{01}\mid x_{02}\mid x_{11}\mid x_{12}\mid x_{22}\)\\
	\includegraphics[width=35mm]{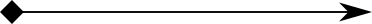} & 3 transitions, labelled \(x_{10}\mid x_{20}\mid x_{21}\) 
\end{tabular}
\end{center}
The edges connecting dashed boxes represent pairs of \(\varepsilon\)-labelled edges, connecting the shaded states within the boxes. The left hand shaded state in one box is connected to the left hand shaded state in the other box, and similarly for the right hand states. The right hand shaded states (marked with a dot) represent the fact that there is a carry that has yet to be resolved.

\begin{figure}
	\centering{
		\resizebox{128mm}{!}{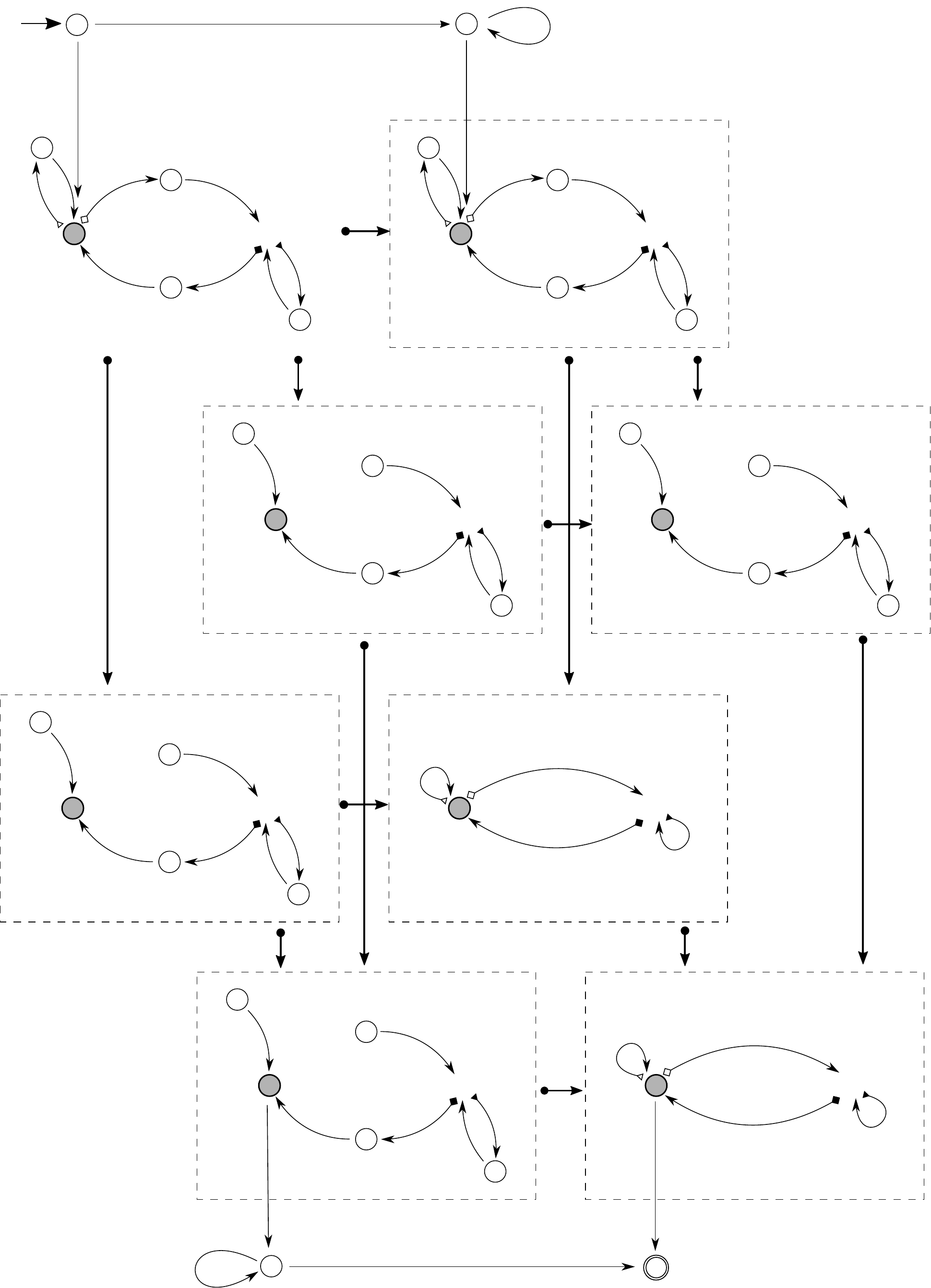}
	\caption{Rational control for the \E system of Theorem~\ref{thm:BSmult}}
	\label{fig:RatControl}}
\end{figure}
The variables \(X,Y,Z,Z_1\) are the `sources' of the powers of \(a\) and \(\ainv\) before the radix point, with \(Z_1\) recording a carry. The variables \(\sA_X, \sa_Y, \sA_Z, \sA_{Z1}\) are the `sources' of powers of \(a\) and \(\ainv\) after the radix point, with \(\sA_{Z1}\) again indicating a carry. Note that the rational control ensures that a word can never be completed with an outstanding carry.

The table \(\sigma\) optionally inserts \(\binv\)'s at the start of the central subwords of both \(x'\) and \(z'\). The table \(\mu\) optionally inserts \(b\)'s at the start of the same subwords. Note that the rational control ensures that these tables are never both used for the same word. The table \(\tau\) occurs at the end of every word in the rational control, with the purpose of deleting all remaining non-terminals.

The tables $\alpha_{ij}$ etc insert powers $\ainv^i$ and $a^j$ at the chosen step, as well as the \(\binv\) that separates them. For example, \(\alpha_{12}\) inserts \(\ainv \binv\) to the word representing \(x\), \(a^2\binv\) to the word representing \(y\), and either \(\ainv^2\binv\) with a carry, or \(\binv\) with a carry, to the word representing \(z\), depending on whether or not there was already a carry present. The choice of $\alpha,\beta,\gamma,\delta$ corresponds to both left and right words being before the radix point, only the right hand word being before the radix point, only the left, and neither, respectively. To ensure that the string \(b\binv\) never gets inserted, the variables \(\tilde{X}, \tilde{Y}, \tilde{Z}\) are used initially. Once a non-zero power of \(a\) or \(\ainv\) has been inserted, the corresponding variable is `initialised' and the tilde version is replaced.

The tables \(\rho\) add powers of \(b\) to the left hand side of each word, using the variables \(X_b,Y_b,Z_b\) as `sources'. 

Consider the third example in the proof of Theorem \ref{thm:BSmult} above: \(x=b\ainv^2\binv\ainv \binv\ainv^{22}\), \(y=b^2a^2\binv a^2\binv a\binv a^3\), \(z=\ainv^2\binv\ainv^2\binv\ainv^3\). This element of the language is produced by the \E system, via the word \(\alpha_{22}\rho_{XY}\alpha_{12}\rho_Y\beta_{11}\delta_{10}\delta_{21}\tau\) of the rational control, in the following way:
{\footnotesize
	\[
	\begin{array}{cccccc}
		\tilde{X}\#\tilde{Y}\#\tilde{Z} & \xtribarrow{\alpha_{22}} & X_b\ainv^2\binv X \# Y_ba^2\binv Y \# \tilde{Z} \\ 
		& \xrightarrow{\rho_{XY}} & 
		bX_b\ainv^2\binv X \# bY_ba^2\binv Y \# \tilde{Z} \\
		& \xdiarrow{\alpha_{12}} & bX_b\ainv^2\binv\ainv \binv X \# bY_ba^2\binv a^2\binv Y \# Z_b\ainv^2\binv Z_1 \\
		& \xrightarrow{\rho_Y} & bX_b\ainv^2\binv\ainv \binv X \# b^2Y_ba^2\binv a^2\binv Y \# Z_b\ainv^2\binv Z_1 \\
		& \xtriarrow{\beta_{11}} & bX_b\ainv^2\binv\ainv \binv\ainv\sA_X^3 \# b^2Y_ba^2\binv a^2\binv a\binv Y \# Z_b\ainv^2\binv\ainv^2\binv Z_1 \\
		& \xdibarrow{\delta_{10}} & bX_b\ainv^2\binv\ainv \binv\ainv(\ainv\sA_X^3)^3 \# b^2Y_ba^2\binv a^2\binv\ainv \binv\sa_Y^3 \# Z_b\ainv^2\binv\ainv^2\binv\sA_Z^3 \\
		& \xdiarrow{\delta_{21}}  & bX_b\ainv^2\binv\ainv \binv\ainv(\ainv(\ainv^2\sA_X^3)^3)^3 \# b^2Y_ba^2\binv a^2\binv\ainv \binv(a\sa_Y^3)^3 \# Z_b\ainv^2\binv\ainv^2\binv(\ainv\sA_Z^3)^3 \\
		& \xrightarrow{~\tau~} & b\ainv^2\binv\ainv \binv\ainv^{22} \# b^2a^2\binv a^2\binv\ainv \binv\ainv^3 \# \ainv^2\binv\ainv^2\binv\ainv^3
	\end{array}
	\]
}

\section{An equation in which the solution set might not be \E.}
\label{sec:BSnonEg}
We conjecture that the set $C = \{ x\#y : x,y \in \NFB,\,xy =_\BS yx \}$ is not \E.

Elements of $\BS(1,k)$ have the form $g=b^ra^u$, where the exponent $u$ of the
fractional part of $g$ lies in the set $\Z[1/k] = \{ i/k^m : i,m \in \Z\}$.
As we saw earlier, we have $g^n = b^{rn}a^{u_n}$ with
$u_n = u(1+k^{r} + \cdots + k^{(n-1)r}) = u\left(\frac{k^{rn}-1}{k^r-1}\right)$ for
$n \ge 0$.  Since $\frac{k^r-1}{k^{rn}-1} \not\in \Z[1/k]$ for $n>1$,
it follows that the element $b^ra$ is not a proper power for any $r \ge 0$.

So the set
\[ \{ b^ra\#\NFB( (b^ra)^n)  : r,n \ge 0 \} =
 \{ b^ra\# b^{rn} a^{\frac{k^{rn}-1}{k^r-1}}  : r,n \ge 0 \}
\]
is the intersection of $C$ with the regular set $b^*a\#b^*a^*$,
and it would suffice to show that this is not \E.

We conjecture that \cite[Theorem A]{Gilman96}  can be applied
to deduce that this set is not indexed; this would of course imply that it is not \E or even \NE.
We have verified by computer that, if the language were indexed, and we
applied that result with $m=2$, then we would deduce that the constant
$k$ in \cite[Theorem A]{Gilman96} satisfies $k \ge 300$.

\section*{Acknowledgements}
The authors wish to thank Graham Campbell for much useful discussion during the early stages of this project. Thanks are also due to the Heilbronn Institute for Mathematical Research for facilitating collaboration via their small grants scheme.

\appendix
\section{Table definitions}\label{ap:tables}
The following are the definitions of the tables \(\alpha_{ij}, \beta_{ij}, \gamma_{ij}, \delta_{ij}\) used in Section \ref{sec:BSexplicit}.
\begin{align*}
	\alpha_{00}\colon& X\mapsto \binv X  & \alpha_{01}\colon & X\mapsto \binv X & \alpha_{02}\colon & X\mapsto \binv X\\
	& Y\mapsto \binv Y && Y\mapsto a\binv Y && Y\mapsto a^2\binv Y \\
	& Z\mapsto \binv Z && \tilde{Y}\mapsto Y_ba\binv Y && \tilde{Y}\mapsto Y_b a^2\binv Y\\
	& Z_1\mapsto \ainv^2\binv Z_1 && Z\mapsto \ainv^2\binv Z_1 && Z\mapsto \ainv \binv Z_1\\
	& && \tilde{Z}\mapsto Z_b\ainv^2 \binv Z_1 && \tilde{Z}\mapsto Z_b\ainv \binv Z_1 \\
	& && Z_1\mapsto \ainv \binv Z_1 && Z_1\mapsto \binv Z_1\\ \\
	\alpha_{10}\colon & X\mapsto \ainv \binv X & \alpha_{11}\colon & X\mapsto \ainv \binv X & \alpha_{12}\colon & X\mapsto \ainv \binv X\\
	& \tilde{X}\mapsto X_b\ainv \binv X && \tilde{X}\mapsto X_b\ainv \binv X && \tilde{X}\mapsto X_b\ainv \binv X \\
	& Y\mapsto \binv Y && Y\mapsto a\binv Y && Y\mapsto a^2\binv Y\\
	& Z\mapsto \ainv \binv Z && \tilde{Y}\mapsto Y_ba\binv Y && \tilde{Y}\mapsto Y_ba^2\binv Y \\
	& \tilde{Z}\mapsto Z_b\ainv \binv Z && Z\mapsto \binv Z && Z\mapsto \ainv^2\binv Z_1\\
	& Z_1\mapsto \binv Z && Z_1\mapsto \ainv^2\binv Z_1 && \tilde{Z}\mapsto Z_b\ainv^2\binv Z_1 \\
	& &&  && Z_1\mapsto \binv Z_1\\ \\
	\alpha_{20}\colon & X\mapsto \ainv^2\binv X & \alpha_{21}\colon & X\mapsto \ainv^2\binv X & \alpha_{22}\colon & X\mapsto \ainv^2\binv X \\
	& \tilde{X}\mapsto X_b \ainv^2\binv X && \tilde{X}\mapsto X_b \ainv^2\binv X && \tilde{X}\mapsto X_b \ainv^2\binv X\\
	& Y\mapsto \binv Y && Y\mapsto a\binv Y && Y\mapsto a^2\binv Y \\
	& Z\mapsto \ainv^2\binv Z && \tilde{Y}\mapsto Y_ba\binv Y && \tilde{Y}\mapsto Y_b a^2\binv Y \\
	& \tilde{Z}\mapsto Z_b \ainv^2\binv Z && Z\mapsto \ainv \binv Z && Z \mapsto \binv Z\\
	& Z_1\mapsto \ainv \binv Z && \tilde{Z}\mapsto Z_b\ainv \binv Z && Z_1\mapsto \ainv^2\binv Z_1\\
	& && Z_1\mapsto \binv Z && \\
\end{align*}
\begin{align*}
	\beta_{00}\colon& \tilde{X},X,\sA_X\mapsto \sA_X^3 & 	\beta_{01}\colon& \tilde{X},X,\sA_X\mapsto \sA_X^3 & 	\beta_{02}\colon& \tilde{X},X,\sA_X\mapsto \sA_X^3 \\
	& Y\mapsto \binv Y & & Y\mapsto a\binv Y & & Y\mapsto a^2\binv Y \\
	& Z\mapsto \binv Z & & \tilde{Y}\mapsto Y_ba\binv Y & & \tilde{Y}\mapsto Y_ba^2\binv Y \\
	& Z_1\mapsto \ainv^2Z_1 & & Z\mapsto \ainv^2\binv Z_1 & & Z\mapsto \ainv \binv Z_1 \\
	& & & \tilde{Z}\mapsto Z_b\ainv^2\binv Z_1 & & \tilde{Z}\mapsto Z_b\ainv \binv Z_1 \\
	& & & Z_1\mapsto \ainv \binv Z_1 & & Z_1\mapsto \binv Z_1 \\ \\
	\beta_{10}\colon & \tilde{X},X,\sA_X\mapsto \ainv\sA_X^3 & \beta_{11}\colon & \tilde{X},X,\sA_X\mapsto \ainv\sA_X^3& \beta_{12}\colon & \tilde{X},X,\sA_X\mapsto \ainv\sA_X^3\\
	& Y\mapsto \binv Y & & Y\mapsto a\binv Y &  & Y\mapsto a^2\binv Y \\
	& Z\mapsto \ainv \binv Z & & \tilde{Y}\mapsto Y_ba\binv Y  & &  \tilde{Y}\mapsto Y_ba^2\binv Y\\
	& \tilde{Z}\mapsto Z_b\ainv \binv Z & & Z\mapsto \binv Z & & Z\mapsto \ainv^2\binv Z_1  \\
	& Z_1\mapsto \binv Z &&  Z_1\mapsto \ainv^2\binv Z_1 & & Z_1\mapsto \ainv \binv Z_1  \\ \\
	\beta_{20}\colon & \tilde{X},X,\sA_X\mapsto \ainv^2\sA_X^3 &  \beta_{21}\colon & \tilde{X},X,\sA_X\mapsto \ainv^2\sA_X^3 & \beta_{22}\colon & \tilde{X},X,\sA_X\mapsto \ainv^2\sA_X^3\\
	& Y\mapsto \binv Y & &  Y\mapsto a\binv Y & & Y\mapsto a^2\binv Y \\
	& Z\mapsto \ainv^2\binv Z & & \tilde{Y}\mapsto Y_ba\binv Y  & &  \tilde{Y}\mapsto Y_ba^2\binv Y  \\
	& \tilde{Z}\mapsto Z_b\ainv^2\binv Z & &  Z\mapsto \ainv \binv Z & &  Z\mapsto \binv Z  \\
	& Z_1\mapsto \ainv \binv Z & &  \tilde{Z}\mapsto Z_b\ainv \binv Z  & &  Z_1\mapsto \ainv^2\binv Z_1  \\
	& & & Z_1\mapsto \binv Z & &
\end{align*}
\begin{align*}
	\gamma_{00}\colon & X\mapsto \binv X & \gamma_{01}\colon & X\mapsto \binv X & \gamma_{02}\colon & X\mapsto \binv X \\
	& \tilde{Y}, Y, \sa_Y\mapsto\sa_Y^3& & \tilde{Y}, Y, \sa_Y\mapsto a\sa_Y^3 && \tilde{Y}, Y, \sa_Y\mapsto a^2\sa_Y^3\\
	& Z,\tilde{Z},\sA_Z\mapsto \sA_Z^3 & & Z,\tilde{Z},\sA_Z\mapsto \ainv^2\sA_{Z1}^3&& Z,\tilde{Z},\sA_Z\mapsto \ainv\sA_{Z1}^3\\
	& Z_1,\sA_{Z1}\mapsto \ainv^2\sA_{Z1}^3& & Z_1,\sA_{Z1}\mapsto \ainv\sA_{Z1}^3&& Z_1,\sA_{Z1}\mapsto \sA_{Z1}^3 \\ \\
	\gamma_{10}\colon & X\mapsto \ainv \binv X & \gamma_{11}\colon &  X\mapsto \ainv \binv X & \gamma_{12}\colon &  X\mapsto \ainv \binv X \\
	& \tilde{X}\mapsto X_b\ainv \binv X & & \tilde{X}\mapsto X_b\ainv \binv X  && \tilde{X}\mapsto X_b\ainv \binv X \\
	& \tilde{Y}, Y, \sa_Y\mapsto\sa_Y^3& & \tilde{Y}, Y, \sa_Y\mapsto a\sa_Y^3 && \tilde{Y}, Y, \sa_Y\mapsto a^2\sa_Y^3\\
	& Z,\tilde{Z},\sA_Z \mapsto \ainv\sA_Z^3 & & Z,\tilde{Z},\sA_Z\mapsto \sA_Z^3 && Z,\tilde{Z},\sA_Z\mapsto \ainv^2\sA_{Z1}^3 \\
	& Z_1,\sA_{Z1}\mapsto \sA_Z^3 & & Z_1,\sA_{Z1}\mapsto \ainv^2\sA_{Z1}^3 && Z_1,\sA_{Z1}\mapsto \ainv\sA_{Z1}^3\\ \\
	\gamma_{20}\colon & X\mapsto \ainv^2\binv X & \gamma_{21}\colon &X\mapsto \ainv^2\binv X  & \gamma_{22}\colon &X\mapsto \ainv^2\binv X  \\
	& \tilde{X}\mapsto X_b\ainv^2\binv X && \tilde{X}\mapsto X_b\ainv^2\binv X && \tilde{X}\mapsto X_b\ainv^2\binv X \\
	& \tilde{Y}, Y, \sa_Y\mapsto\sa_Y^3& & \tilde{Y}, Y, \sa_Y\mapsto a\sa_Y^3 && \tilde{Y}, Y, \sa_Y\mapsto a^2\sa_Y^3\\
	& Z,\tilde{Z},\sA_Z\mapsto \ainv^2\sA_Z^3& & Z,\tilde{Z},\sA_Z\mapsto \ainv\sA_Z^3 && Z,\tilde{Z},\sA_Z\mapsto\sA_Z^3\\
	& Z_1,\sA_{Z1}\mapsto \ainv\sA^3_Z& & Z_1,\sA_{Z1}\mapsto \sA_Z^3 && Z_1,\sA_{Z1}\mapsto \ainv^2\sA_{Z1}^3\\
	& & &  && \\
\end{align*}
\begin{align*}
	\delta_{00}\colon& \tilde{X},X,\sA_X\mapsto \sA_X^3 & 	\delta_{01}\colon& \tilde{X},X,\sA_X\mapsto \sA_X^3 & 	\delta_{02}\colon& \tilde{X},X,\sA_X\mapsto \sA_X^3 \\
	&  \tilde{Y},Y,\sa_Y\mapsto \sa_Y^3 & & \tilde{Y},Y,\sa_Y\mapsto a\sa_Y^3 & & \tilde{Y},Y,\sa_Y\mapsto a^2\sa_Y^3 \\
	& \tilde{Z},Z,\sA_Z\mapsto\sA_Z^3 & & \tilde{Z},Z,\sA_Z\mapsto \ainv^2\sA_{Z1}^3 & & \tilde{Z},Z,\sA_Z\mapsto \ainv \sA_{Z1}^3 \\
	& Z_1,\sA_{Z1}\mapsto \ainv^2\sA_{Z1}^3 & & Z_1,\sA_{Z1}\mapsto \ainv\sA_{Z1}^3 & & Z_1,\sA_{Z1}\mapsto \sA_{Z1}^3\\ \\
	\delta_{10}\colon& \tilde{X},X,\sA_X\mapsto \ainv\sA_X^3 & 	\delta_{11}\colon& \tilde{X},X,\sA_X\mapsto \ainv\sA_X^3 & 	\delta_{12}\colon& \tilde{X},X,\sA_X\mapsto \ainv\sA_X^3 \\
	& \tilde{Y},Y,\sa_Y\mapsto \sa_Y^3 & & \tilde{Y},Y,\sa_Y\mapsto a\sa_Y^3 & & \tilde{Y},Y,\sa_Y\mapsto a^2\sa_Y^3 \\
	& \tilde{Z},Z,\sA_Z\mapsto \ainv\sA_Z^3 & & \tilde{Z},Z,\sA_Z\mapsto \sA_Z^3 & & \tilde{Z},Z,\sA_Z\mapsto \ainv^2\sA_{Z1}^3\\
	& Z_1,\sA_{Z1}\mapsto \sA_Z^3 & & Z_1,\sA_{Z1}\mapsto \ainv^2\sA_{Z1}^3  & & Z_1,\sA_{Z1}\mapsto \ainv\sA_{Z1}^3 \\ \\	
	\delta_{20}\colon& \tilde{X},X,\sA_X\mapsto \ainv^2\sA_X^3 & 	\delta_{21}\colon& \tilde{X},X,\sA_X\mapsto \ainv^2\sA_X^3 & 	\delta_{22}\colon& \tilde{X},X,\sA_X\mapsto \ainv^2\sA_X^3 \\
	& \tilde{Y},Y,\sa_Y\mapsto \sa_Y^3 & & \tilde{Y},Y,\sa_Y\mapsto a\sa_Y^3& & \tilde{Y},Y,\sa_Y\mapsto a^2\sa_Y^3 \\
	& \tilde{Z},Z,\sA_Z\mapsto \ainv^2\sA_Z^3 & & \tilde{Z},Z,\sA_Z\mapsto \ainv \sA_Z^3& & \tilde{Z},Z,\sA_Z\mapsto \sA_Z^3\\
	& Z_1,\sA_{Z1}\mapsto \ainv\sA_Z^3 & &  Z_1,\sA_{Z1}\mapsto \sA_Z^3 & &  Z_1,\sA_{Z1}\mapsto \ainv^2\sA_{Z1}^3\\
\end{align*}

\end{document}